\newtheorem{theorem}{Theorem}
\newtheorem{lemma}[theorem]{Lemma}
\newtheorem{definition}[theorem]{Definition}
\theoremstyle{definition}
\newtheorem{rmk}[theorem]{Remark}
\theoremstyle{remark}
\newcommand{\Ric}{\operatorname{Ric}}
\DeclareMathOperator{\area}{area}
\newcommand{\R}{\mathbb{R}}
\newcommand{\cH}{\mathcal{H}}
\newcommand{\cI}{\mathcal{I}}
\newcommand{\cL}{\mathcal{L}}
\title{Isoperimetric structure of asymptotically conical manifolds}
\author{Otis Chodosh}
\address{Department of Mathematics, Stanford University, CA-94305, United States of America}
\email{ochodosh@math.stanford.edu}
\author{Michael Eichmair}
\address{Department of Mathematics, ETH, 8092 Z\"urich, Switzerland}
\email {michael.eichmair@math.ethz.ch}
\author{Alexander Volkmann}
\address{Max Planck Institute for Gravitational Physics, 14476 Potsdam-Golm, Germany}
\email{alexander.volkmann@aei.mpg.de}
\begin{document}
\begin{abstract}
We study the isoperimetric structure of Riemannian manifolds that are asymptotic to cones with non-negative Ricci curvature. Specifically, we generalize to this setting the seminal results of G. Huisken and S.-T. Yau \cite{Huisken-Yau:1996} on the existence of a canonical foliation by volume preserving stable constant mean curvature surfaces at infinity of asymptotically flat manifolds as well as the results of the second-named author with S. Brendle \cite{offcenter} and J. Metzger \cite{isostructure, hdiso} on the isoperimetric structure of asymptotically flat manifolds. We also include an observation on the isoperimetric cone angle of such manifolds. This result is a natural analogue of the positive mass theorem in this setting. 
\end{abstract}

\maketitle


\section{Introduction}

The seminal results of G. Huisken and S.-T. Yau \cite{Huisken-Yau:1996} on the existence of a canonical foliation at infinity of asymptotically flat manifolds and the uniqueness of the leaves of this foliation (which has been refined by J. Qing and G. Tian \cite{Qing-Tian:2007}) has had considerable repercussions on the study of such manifolds, both from a physical and a geometric point of view. We refer the reader to the introductions of \cite{Huang:2010, stablePMT, isostructure, hdiso} for recent accounts on these developments. The goal of this paper is to extend many of these results to asymptotically conical manifolds. \\

Let $m \geq 2$ be an integer and $(L, g_L)$ be a connected closed Riemannian manifold of dimension $m-1$. If $(M, g)$ is an $m$ dimensional Riemannian manifold such that there exists a compact set $K \subset M$ and a diffeomorphism 
\[
M \setminus K \cong (1, \infty) \times L
\]
with
\begin{align} \label{expansionmetric}
g = dr^2 + r^2 g_L + o(1) \qquad \text{ as } r \to \infty,
\end{align}
then we say that $(M, g)$ is asymptotically conical with link $(L, g_L)$. 

If $k \geq 0$ is an integer, we say that the expansion  (\ref{expansionmetric}) holds in $C^{k}$ if 
\[
\sum_{\ell = 0}^k  r^\ell |\nabla^\ell (g - g_C)| = o (1) \qquad \text{ as } r \to \infty. 
\]
Here, $\nabla$ is the Levi-Civita connection of the cone $(C, g_C)$ where 
\[
C = (0, \infty) \times L  \qquad \text{ and }  \qquad g_C = dr^2 + r^2 g_L
\]
and norms are computed with respect to $g_C$. Similarly, given $\alpha \in (0, 1)$, we say the expansion holds in $C^{k, \alpha}$ if the weighted $C^{k, \alpha}$ norms of $g - g_C$ tend to zero as $r \to \infty$.

Given $r > 1$ we denote by $B_r$ the region in $M$ consisting of $K$ and the diffeomorphic image of the set $(1, r) \times L$. It is convenient to introduce a smooth positive function $|\cdot| : M \to \R$ that extends the coordinate function $r$ on $M \setminus B_{2r}$ to all of $M$. \\


In this paper, we study the isoperimetric structure of asymptotically conical manifolds $(M, g)$ of dimension $m$ whose link $(L, g_L)$ satisfies conditions
\begin{align}
\Ric_L &\geq (m -2) g_L \label{Riccicondition}  \\
\text{area} (L, g_L) &< \omega_{m-1} \label{areacondition}
\end{align}
where $\omega_{m-1}$ is the volume of the unit sphere. In view of (\ref{Riccicondition}) and R. Bishop's theorem, (\ref{areacondition}) is equivalent to the requirement that $(L, g_L)$ is not isometric to the $(m-1)$-dimensional unit sphere. Our results show that conditions \eqref{Riccicondition} and \eqref{areacondition} are in many ways analogous to positivity of mass and non-negativity of scalar curvature for asymptotically flat manifolds. \\


The existence of a foliation through volume preserving stable CMC surfaces asserted in our first result below comes from an application of the inverse function theorem, cf. \cite{Ye:1996b}. An analogous result in the asymptotically flat setting was first proven by G.\ Huisken and S.-T.\ Yau in \cite{Huisken-Yau:1996} using volume preserving mean curvature flow.

\begin{theorem} [Canonical foliation by CMC surfaces] \label{existencefoliation}
Let $(M, g)$ be an asymptotically conical manifold of dimension $m$ such that the expansion (\ref{expansionmetric}) holds in $C^{2, \alpha}$ and with 
\[
\lambda_1 (- \Delta_L) > (m-1).
\]
There are $\delta > 0$ and $V_0 > 0$ such that the following hold. Let $V > V_0$. Let $r > 1$ be such that $\mathcal{L}^m (B_r) = V$. There is exactly one $u_V \in C^{2, \alpha} (L)$ with 
\[
- \delta \leq u_V \leq \delta
\]
such that 
\[
\Sigma_V = \{ (r(1 + u_V (x)), x ) : x \in L\}
\] 
is a CMC surface that encloses volume $V$. Moreover, $\Sigma_V$ is volume preserving stable, the surfaces 
\[
\{\Sigma_V\}_{V > V_0}
\]
form a foliation of the complement of a compact subset of $M$, and the weighted $C^{2, \alpha}$ norms of $u_V$ tend to zero as $V \to \infty$.  
\end{theorem}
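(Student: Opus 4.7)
Following Ye's strategy \cite{Ye:1996b}, I apply the implicit function theorem to the system
\[
H(r, u)(x) - \mu = 0 \text{ for all } x \in L, \qquad \mathcal{L}^m(\Omega_{r,u}) = V,
\]
where $H(r, u)$ is the mean curvature of $\Sigma_{r,u} := \{(r(1+u(x)), x) : x \in L\}$ in $(M,g)$, $\Omega_{r,u}$ is the region it bounds, and $r = r(V)$ is determined by $\mathcal{L}^m(B_r) = V$. The unknowns are $(u, \mu) \in C^{2,\alpha}(L) \times \R$, to be found near the base point $(0, (m-1)/r)$, which is an exact solution on the model cone.

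\textbf{Linearization on the cone.} The slice $\{r\} \times L \subset (C, g_C)$ is totally umbilic with $H = (m-1)/r$ and $|A|^2 = (m-1)/r^2$, while $\Ric_C(\partial_r, \partial_r) = 0$ since the radial direction of a cone has vanishing Ricci curvature. Using that $v = r u$ is the normal displacement of $\Sigma_{r,u}$ from $\Sigma_{r,0}$, the first variation formula for mean curvature yields, on the cone,
\[
H(r, u) = \frac{m-1}{r} - \frac{1}{r}\bigl(\Delta_L u + (m-1) u\bigr) + Q(r, u),
\]
with $Q(r, u) = O(r^{-1}\|u\|_{C^{2,\alpha}(L)}^2)$. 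The linearization of $\Phi(u, \mu) = (H(r,u) - \mu, \mathcal{L}^m(\Omega_{r,u}) - V)$ at the base point is therefore
\[
D\Phi_0(v, s) = \Bigl(-\frac{1}{r}\bigl(\Delta_L v + (m-1) v\bigr) - s,\ r^m \!\int_L v\, d\vol_L\Bigr).
\]
Decomposing $v = \bar v + \tilde v$ into its constant average and mean-zero part, $D\Phi_0(v, s) = (g, t)$ is solved uniquely by $\bar v = t/(r^m \area(L, g_L))$, $s = -(m-1)\bar v/r - \bar g$, and $\tilde v = -r(\Delta_L + (m-1))^{-1}(g - \bar g)$. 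The assumption $\lambda_1(-\Delta_L) > m - 1$ is precisely what guarantees that $\Delta_L + (m-1)$ is invertible on mean-zero functions, so $D\Phi_0$ is an isomorphism.

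\textbf{Implicit function theorem and remaining assertions.} To avoid tracking $r$-dependent constants, I rescale the metric by $\tilde g = r^{-2} g$ on a fixed annular neighborhood of the unit slice; the $C^{2,\alpha}$-asymptotic expansion of $g$ translates into $\tilde g \to g_C$ in $C^{2,\alpha}$ as $r \to \infty$. In this picture the rescaled linearization and its inverse are uniformly bounded, while both $Q$ and the contribution of $g - g_C$ tend to zero. A quantitative inverse function theorem then produces, for each sufficiently large $V$, a unique small solution $(u_V, \mu_V)$, with $\|u_V\|_{C^{2,\alpha}(L)}$ (and its weighted analogue at the original scale) tending to zero as $V \to \infty$. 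Volume-preserving stability of $\Sigma_V$ follows because its Jacobi operator, restricted to functions of mean zero with respect to $d\area_{\Sigma_V}$, is a small perturbation of $-r^{-2}(\Delta_L + (m-1))$, whose spectrum on mean-zero functions is strictly positive under the hypothesis. To obtain the foliation, differentiate the IFT solution in $V$: the normal speed $\partial_V[r(V)(1 + u_V)]$ is dominated by $r'(V) > 0$, so the surfaces $\Sigma_V$ are nested and exhaust a neighborhood of infinity.

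\textbf{Main obstacle.} The key technical issue is propagating the implicit function theorem uniformly as $r \to \infty$: on mean-zero functions the inverse of the cone linearization carries an explicit factor of $r$, which only just balances the $o(r^{-1})$ error in mean curvature induced by $g - g_C$. The rescaling $\tilde g = r^{-2} g$ is the natural device for handling this, reducing the theorem to a uniform small perturbation of the explicit cone problem, where the spectral gap $\lambda_1(-\Delta_L) > m-1$ provides exactly the invertibility needed.
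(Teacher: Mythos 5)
Your overall strategy coincides with the paper's: both apply the implicit function theorem to the mean curvature operator of radial graphs over the link, use the spectral gap $\lambda_1(-\Delta_L) > m-1$, and reduce the large-$r$ problem to a uniform perturbation of the exact cone by rescaling to unit scale. The functional-analytic setup differs mildly: you fix the volume and introduce the Lagrange multiplier $\mu$, inverting $\Delta_L + (m-1)$ only on mean-zero functions, whereas the paper solves the prescribed-CMC equation $H(g,u) = H$ for each constant $H$ near $m-1$, exploiting that $-\Delta_L - (m-1)$ is invertible on all of $C^{2,\alpha}(L)$ (its lowest eigenvalue is $-(m-1) < 0$ and the next is $\lambda_1(-\Delta_L) - (m-1) > 0$), recovering the volume parametrization afterwards. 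Likewise, the paper obtains the foliation from a maximum-principle ordering ($H_2 < H_1$ close to $m-1$ forces $u_1 < u_2$ for the corresponding solutions), while you differentiate the IFT solution in $V$; your version is legitimate but requires the quantitative estimate that $r\,\partial_V u_V = o(r'(V))$, which follows from differentiating the rescaled IFT relation but is only asserted, not derived, in your sketch. Your treatment of stability (positivity of the Jacobi operator on mean-zero functions, stable under perturbation after rescaling) is fine and in fact more explicit than the paper's.

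The genuine gap is the uniqueness assertion. The theorem claims there is \emph{exactly one} $u_V$ in the $C^0$-slab $-\delta \leq u_V \leq \delta$, but your quantitative inverse function theorem only yields uniqueness within a small ball in $C^{2,\alpha}(L)$. A priori there could exist a CMC graph of the correct volume with $|u| \leq \delta$ but with large gradient or second fundamental form, and nothing in your argument excludes it. The paper closes precisely this gap with an a priori estimate: by the maximum principle (which pins the CMC value close to $(m-1)/r$) and standard estimates for graphs of prescribed mean curvature (e.g., \cite[Corollary 16.7]{Gilbarg-Trudinger:1998} together with Schauder theory), there is $\delta > 0$ such that every solution of $H(g,u) = \mathrm{const}$ with $|u| < \delta$ and $g$ close to $g_C$ automatically satisfies $\|u\|_{C^{2,\alpha}} < \sigma$, placing it in the regime where the local IFT uniqueness applies. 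Without this elliptic-regularity step the ``exactly one'' claim as stated is unproven; with it added, your proof is complete and essentially equivalent to the paper's.
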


\begin{rmk} If $(L, g_L)$ satisfies conditions (\ref{Riccicondition}) and (\ref{areacondition}) then 
\[
\lambda_1 (- \Delta_L) > m-1
\] 
by the Lichnerowicz eigenvalue estimate \cite[Theorem 9 of Chapter III]{Chavel:1984}.
\end{rmk}


\begin{theorem} [Existence and uniqueness of large isoperimetric regions] \label{thm:uniquenessiso}
Let $(M, g)$ be an asymptotically conical Riemannian manifold of dimension $m$ whose link $(L, g_L)$ satisfies conditions \eqref{Riccicondition} and \eqref{areacondition} and such that \eqref{expansionmetric} holds in $C^{1, \alpha}$. There is $V_0 >0$ with the following property. For every $V > V_0$ there is an isoperimetric region of volume $V$. Every such region $\Omega_V$ is regular and close to $B_r$ where $r>1$ is such that 
\[
\mathcal{L}^m_g(B_r) = V.
\] 
If the expansion (\ref{expansionmetric}) holds in $C^{2, \alpha}$, then 
\[
\partial \Omega_V = \Sigma_V
\]
where $\Sigma_V$ is as in Theorem \ref{existencefoliation}. In particular, $\Omega_V$ is the unique isoperimetric region of volume $V$. 
\end{theorem}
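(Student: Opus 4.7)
My plan is to prove the statement in three steps: (i) existence of an isoperimetric region of volume $V$ for large $V$ by the direct method; (ii) closeness of any such region to the coordinate ball $B_r$ by a blow-down/rescaling argument; and (iii) identification of $\partial \Omega_V$ with the CMC leaf $\Sigma_V$ from Theorem~\ref{existencefoliation} under the $C^{2,\alpha}$ hypothesis, yielding uniqueness.

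\emph{Existence.} For $V > V_0$, take a minimizing sequence $\Omega_k$ with $\mathcal{L}^m_g(\Omega_k) = V$ and $|\partial^* \Omega_k|_g \to I(V)$, where $I(V)$ denotes the isoperimetric profile of $(M,g)$. $BV$-compactness on compact subsets of $M$ yields a subsequential limit $\Omega_V$ with $V_1 := |\Omega_V|_g \le V$; set $V_2 := V - V_1$ for the mass that might escape to infinity. A cut-off at a suitable scale decomposes $|\partial^* \Omega_k|_g \ge P_1 + P_2 - o(1)$, with $P_1 \to |\partial^*\Omega_V|_g$ and $P_2$ measuring the escaping perimeter. To bound $P_2$ I would use the sharp cone isoperimetric inequality
\[
I_C(W) = m^{(m-1)/m}\operatorname{area}(L, g_L)^{1/m} W^{(m-1)/m},
\]
with equality exactly on caps $\{r \le R\}$---a consequence of $\Ric_L \ge (m-2) g_L$, which renders $(C,g_C)$ non-negatively Ricci-curved, via a radial symmetrization argument (or the sharp isoperimetric inequality on non-negatively Ricci-curved cones). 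The $C^{1,\alpha}$ conicality transfers this estimate to the end of $(M,g)$, so $P_2 \ge I_C(V_2)(1 - o(1))$. Testing the profile against $B_r$ with $|B_r|_g = V$ gives the matching upper bound $I(V) \le I_C(V)(1 + o(1))$. The no-escape conclusion $V_2 = 0$ is then extracted by playing the strict concavity of $W \mapsto W^{(m-1)/m}$ against these comparisons and exploiting the local Euclidean isoperimetric inequality on $(M,g)$, whose sharp constant strictly exceeds $c_C := m^{(m-1)/m}\operatorname{area}(L, g_L)^{1/m}$ by $\operatorname{area}(L, g_L) < \omega_{m-1}$: for $V_1, V_2 > 0$, the sum $I(V_1) + I_C(V_2)$ then exceeds $I_C(V)(1+o(1))$ for large $V$, contradicting the minimizing property.

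\emph{Closeness and identification.} Regularity of $\partial \Omega_V$ away from a codimension-$\ge 7$ singular set follows from standard perimeter-minimizer theory, and the singular set is ruled out via Allard's $\varepsilon$-regularity once closeness to $B_r$ is established. For closeness, rescale $g_V := r^{-2} g$ with $|B_r|_g = V$: the metrics $g_V$ converge in $C^{1,\alpha}_{loc}$ to $(C, g_C)$ away from the apex as $V \to \infty$, and $r^{-1}\Omega_V$ asymptotically minimizes $g_V$-perimeter at a fixed volume. By $BV$-compactness and the uniqueness of caps as isoperimetric regions of the cone, $r^{-1}\Omega_V \to$ the matching cap in $L^1_{loc}$, so $\Omega_V$ is $C^{1,\alpha}$-close to $B_r$. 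Under the $C^{2,\alpha}$ expansion this upgrades to $C^{2,\alpha}$-closeness, and $\partial \Omega_V$---a smooth CMC surface by the variational characterization of isoperimetric regions---can be written as a normal graph over $\partial B_r$ of small $C^{2,\alpha}$-norm. Theorem~\ref{existencefoliation} provides the unique such CMC graph enclosing volume $V$, namely $\Sigma_V$; hence $\partial \Omega_V = \Sigma_V$ and uniqueness of the isoperimetric region follows.

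\emph{Main obstacle.} The principal technical difficulty is the no-escape argument: one must treat separately the regimes $V_1 \ll V$, $V_1 \sim V_2$, and $V_1 \sim V$, quantify strict concavity, choose the cut-off scale so that the transferred cone isoperimetric inequality has controlled error, and ensure that the strict gap between the local Euclidean constant and $c_C$ (arising from $\operatorname{area}(L, g_L) < \omega_{m-1}$) compensates for the $o(1)$ errors in each regime. Full escape ($V_1 = 0$) requires showing in particular that $B_r$ strictly improves on the pure cone-cap as a competitor for large $V$.
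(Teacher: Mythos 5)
Your overall skeleton is the same as the paper's: direct method with possible loss of volume at infinity, blow-down of the rescaled regions to the cone, the Morgan--Ritor\'e uniqueness of caps (the paper's Lemma \ref{ISOincone}) to identify the limit, Allard regularity to upgrade convergence, and finally the uniqueness statement of Theorem \ref{existencefoliation} applied to $\partial \Omega_V$ written as a small $C^{2,\alpha}$ graph. However, your no-escape step has a genuine gap in the regime where the escaping volume $V_2$ stays bounded (or more generally is $o(V)$) while $V \to \infty$. There the strict-concavity gap $I_C(V_1) + I_C(V_2) - I_C(V)$ is only of size $I_C(V_2) = O(1)$, whereas every comparison you invoke --- transferring the cone inequality to the end, and especially the lower bound $I(V_1) \geq I_C(V_1)(1 - o(1))$ for the retained volume --- is only \emph{multiplicative}, with errors $o(1)\cdot I_C(V_1) \to \infty$ in absolute size. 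Since the hypothesis \eqref{expansionmetric} assumes no decay \emph{rate} whatsoever (only $o(1)$), these errors cannot be quantified to stay below a fixed constant, and the value comparison cannot close: a configuration consisting of a near-optimal region of volume $V_1$ plus a far-away small ball of volume $V_2$ is, at the level of perimeter values with multiplicative control, indistinguishable from optimal. Splitting into the regimes $V_1 \sim V_2$ versus $V_2$ bounded, as you propose, isolates the problem but does not solve it in the second regime.

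The paper closes exactly this regime by a structural rather than quantitative argument. First, Lemma \ref{lem:minimizingsequences} (from Ritor\'e--Rosales and Morgan--Ritor\'e) packages any escaped volume as a \emph{Euclidean} ball of radius $\rho_i$, with $\omega_{m-1}\rho_i^m/m + \mathcal{L}^m_g(\Omega_i) = V_i$ and $\omega_{m-1}\rho_i^{m-1} + \mathcal{H}^{m-1}_g(\partial^*\Omega_i) = A(V_i)$; note the escaping pieces pay the Euclidean, not the cone, constant, since blow-downs around diverging points of bounded volume are flat. Second, the rescaled sequence is minimizing for the cone problem, so by Lemma \ref{ISOincone} it converges to the cap, which already forces $\rho_i \ll r_i$. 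Third --- and this is the missing idea in your proposal --- a first variation (Lagrange multiplier) argument: if $\rho_i > 0$, moving volume between the two components of the minimizing configuration forces the mean curvature of $\partial\Omega_i$, which is $\sim (m-1)/r_i$, to equal that of the residual ball, $(m-1)/\rho_i$; this contradicts $\rho_i \ll r_i$ and eliminates escape entirely. You should replace your concavity bookkeeping in the bounded-escape regime by this multiplier matching (or an equivalent device). A lesser omission: after blow-down you need the hole-filling argument (as in \cite[p.\ 175]{isostructure}) to exclude additional small boundary components of the rescaled regions away from the cap before lifting the convergence back to $\Omega_V$; $L^1_{loc}$ convergence plus Allard alone does not rule these out.
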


\begin{rmk} In Appendix \ref{app:alternativecondition} we show that condition \eqref{Riccicondition} in Theorem \ref{thm:uniquenessiso} may be replaced by a weaker purely isoperimetric condition on the link $(L, g_L)$.
\end{rmk}

An analogous result in the asymptotically flat setting has been shown by the second-named author and J.\ Metzger \cite{isostructure,hdiso}, building on work of H.\ Bray \cite{Bray:1997} who characterized isoperimetric regions in the Schwarzschild metric. Subsequently, the first-named author \cite{Chodosh:2014} established related results for asymptotically hyperbolic three manifolds. The proof of Theorem \ref{thm:uniquenessiso} is modeled on these results and relies on the work of F. Morgan and M. Ritor\'e \cite{Morgan-Ritore:2002} who have classified the isoperimetric regions of cones whose link satisfies \eqref{Riccicondition} and \eqref{areacondition}.


\begin{definition} Let $(M, g)$ be asymptotically conical. Given a subset $S \subset M$ we let 
\[
\underline {r} (S) =  \sup \{ r > 1 : B_r \cap S = \emptyset\}
\]
and 
\[
\overline {r} (S) = \inf \{ r > 1 : S \subset B_r\}. 
\]
\end{definition}


The following theorem is the analogue for asymptotically conical manifolds of the seminal uniqueness results for volume preserving stable constant mean curvature surfaces in asymptotically flat manifolds due G.\ Huisken and S.-T.\ Yau \cite{Huisken-Yau:1996} (and refined by J.\ Qing and G.\ Tian \cite{Qing-Tian:2007}) as well as the results of the second-named author with S. Brendle \cite{offcenter} and J.\ Metzger \cite{stablePMT}. 

\begin{theorem} [Uniqueness of large closed volume preserving stable CMC surfaces] \label{thm:uniquestable} 
Let $(M, g)$ be a $3$-dimensional asymptotically conical Riemannian manifold with link $(L, g_L)$ such that the expansion (\ref{expansionmetric}) holds in $C^{2, \alpha}$ and such that the scalar curvature $R$ of $(M, g)$ satisfies the estimate
\[
R \geq - O (r^{-2 - \epsilon})
\] 
for some $\epsilon > 0$. We also assume that 
\[
K_L \geq 1
\] 
with strict inequality on a dense set, where $K_L$ is the Gaussian curvature of the link. There are $\alpha, \beta > 0$ with the following property. Let $\Sigma$ be a connected closed volume preserving stable CMC surface in $(M, g)$ with $\underline {r} (\Sigma) > \alpha$. If 
\[
\overline {r} (\Sigma) H(\Sigma) > \beta,
\]
then $\Sigma$ is close to an outlying geodesic sphere of radius $2 / H(\Sigma)$. If 
\[
\overline {r} (\Sigma) H(\Sigma) \leq \beta,
\]
then $\Sigma$ is part of the canonical foliation $\{\Sigma_V\}_{V > V_0}$ of Theorem \ref{existencefoliation}. 
\end{theorem}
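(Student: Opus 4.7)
My approach is by contradiction: I assume that for some $\alpha_n, \beta_n \to \infty$ there exist connected closed volume preserving stable CMC surfaces $\Sigma_n \subset (M, g)$ with $\underline{r}(\Sigma_n) > \alpha_n$ that satisfy neither alternative, and derive a contradiction. Passing to a subsequence, one of two regimes occurs: (A) $\overline{r}(\Sigma_n)\, H(\Sigma_n) \to \infty$, or (B) $\overline{r}(\Sigma_n)\, H(\Sigma_n) \leq \beta$ for some fixed $\beta$. First I would establish Schoen--Simon type a priori estimates $|A|(\Sigma_n) \leq C\, H(\Sigma_n)$ valid for volume preserving stable CMC surfaces; combined with Gauss--Bonnet and a Willmore-type argument this forces $\Sigma_n$ to be a topological sphere for $n$ large.

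\textbf{Effective Willmore inequality.} The heart of the matter is an estimate on $\int_{\Sigma_n} H^2$ derived from volume preserving stability. Uniformizing $\Sigma_n \cong S^2$ and using a M\"obius normalization, I produce Hersch test functions $\phi_{n,1}, \phi_{n,2}, \phi_{n,3}$ with $\int_{\Sigma_n} \phi_{n,i}\, dA = 0$, $\sum_i \phi_{n,i}^2 \equiv 1$, and $\sum_i \int_{\Sigma_n} |\nabla \phi_{n,i}|^2\, dA = 8\pi$. Summing volume preserving stability tested against the $\phi_{n,i}$ yields
\[
\int_{\Sigma_n}\bigl( |A|^2 + \Ric(\nu, \nu)\bigr)\, dA \leq 8\pi.
\]
Substituting $|A|^2 = |\mathring{A}|^2 + H^2/2$, using the Gauss equation $R_\Sigma = R - 2\Ric(\nu, \nu) + H^2/2 - |\mathring{A}|^2$, and invoking Gauss--Bonnet ($\int_{\Sigma_n} R_\Sigma\, dA = 8\pi$) rearranges to the effective Willmore bound
\[
\int_{\Sigma_n} H^2\, dA \leq 16\pi - \tfrac{2}{3}\int_{\Sigma_n} |\mathring{A}|^2\, dA - \tfrac{2}{3}\int_{\Sigma_n} R\, dA.
\]
The decay $R \geq -O(r^{-2-\epsilon})$ together with $\underline{r}(\Sigma_n) \to \infty$ yields $\int_{\Sigma_n} R\, dA \geq -o(1)$, hence $\int_{\Sigma_n} H^2\, dA \leq 16\pi + o(1)$.

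\textbf{Case (A), off-center.} I rescale the ambient metric by $H_n/2$ about a base point $p_n \in \Sigma_n$. Because $\overline{r}_n H_n \to \infty$ and $\underline{r}_n \to \infty$, the rescaled ambient converges locally in $C^{2,\alpha}$ to flat Euclidean space on every ball of fixed radius about $p_n$. The universal curvature estimate together with smooth compactness produces a complete volume preserving stable CMC limit $\Sigma_\infty \subset \R^3$ with $H \equiv 2$ and $\int_{\Sigma_\infty} H^2 \leq 16\pi$. The Willmore inequality $\int H^2 \geq 16\pi$ for closed surfaces in $\R^3$ then forces $\Sigma_\infty$ to be the unit round sphere, so $\Sigma_n$ is close to an outlying geodesic sphere of radius $2/H_n$---contradicting the failure of the first alternative.

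\textbf{Case (B), centered, and main obstacle.} I rescale the ambient by $1/\overline{r}_n$; the rescaled ambient converges pointed-$C^{2,\alpha}$, away from the apex, to the model cone $(C, g_C)$, and the rescaled $\Sigma_n$ has uniformly bounded mean curvature and second fundamental form. A subsequential limit $\Sigma_\infty$ is a volume preserving stable CMC in $(C, g_C)$. Invoking the classification results of Morgan--Ritor\'e together with the strict link condition $K_L > 1$ on a dense set, I would identify $\Sigma_\infty$ as a coordinate slice $\{r = 1\}$. For large $n$ the rescaled $\Sigma_n$ is then a small $C^{2,\alpha}$-normal graph over $\{r=1\}$, placing $\Sigma_n$ in the range of the inverse function theorem underlying Theorem~\ref{existencefoliation}. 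Its uniqueness clause identifies $\Sigma_n = \Sigma_{V_n}$ for $V_n$ the enclosed volume, completing the contradiction. The crux of the argument is precisely this classification step: Morgan--Ritor\'e treat \emph{isoperimetric} regions, whereas here one only has a volume preserving stable CMC limit, and upgrading requires either an Alexandrov-type rigidity in cones or a quantitative integral comparison exploiting the strict positivity of $\int_\Sigma R\, dA$ coming from $K_L > 1$ densely; the same mechanism should simultaneously prevent the rescaled surfaces from degenerating onto the apex.
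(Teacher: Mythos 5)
Your overall architecture matches the paper's proof quite closely (rescale by $1/\overline{r}$, a Christodoulou--Yau/Hersch-type integral curvature bound combined with the scalar curvature decay, Ros's characterization in the large-$\overline{r}H$ regime, convergence to a stable CMC immersion in the cone in the other regime, and identification via the uniqueness clause of Theorem \ref{existencefoliation}). However, the step you yourself flag as the ``crux'' --- classifying the volume preserving stable CMC limit in $(C,g_C)$ --- is a genuine gap in your write-up, and your stated worry rests on a misreading of Morgan--Ritor\'e: their Theorem 3.6 is precisely a \emph{stability} classification (their Corollary 3.9 is the isoperimetric statement), and the paper's Lemma \ref{CMCincone} adapts it to immersions that are merely complete away from the tip, using the logarithmic cut-off trick at the cone point and the observation from Appendix \ref{sec:geometryofcones} that where $\Ric_L > (m-2)g_L$ the radial direction is the unique null eigendirection of $\Ric_C$, forcing a totally umbilical limit with $\Ric(\nu,\nu)=0$ to be a covering of a slice. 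No new Alexandrov-type rigidity is needed; but without invoking this lemma (or reproving it), your Case (B) does not close. Note also that for Lemma \ref{CMCincone} to apply you must verify its hypotheses for the limit --- in particular finite area and \emph{non-zero} constant mean curvature. The paper secures the latter by a maximum principle comparison with the slice $\{r=1\}$ touching the rescaled surface from outside, giving $H_k \geq 2 - o(1)$, and the former from the integral bound $\int_{S_k} |h_k|^2 \leq \gamma$ together with this mean curvature lower bound; your proposal records neither, and without the lower bound on $H$ your limit could a priori be minimal, outside the scope of the classification.

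Two smaller points. First, your genus-zero reduction is circular: you use Hersch test functions (which require $\Sigma_n \cong S^2$) to derive the Willmore bound, but claim sphere topology from ``a Willmore-type argument'' plus an a priori estimate $|A| \leq C\,H$ that is false in general for volume preserving stable CMC surfaces (it fails for small $H$; cf.\ Lemma \ref{largesmallmc}, which only gives $|A| \leq \beta$ when $H \leq \alpha$). The paper sidesteps topology entirely by using the Christodoulou--Yau estimate (Lemma \ref{lem:CY}), valid in every genus with constant $64\pi$, and lets the limit classification handle the rest. Second, your assertion that $R \geq -O(r^{-2-\epsilon})$ and $\underline{r}(\Sigma_n) \to \infty$ yield $\int_{\Sigma_n} R \geq -o(1)$ is not automatic: one needs the Huisken--Yau type estimate
\[
\underline{r}(\Sigma)^{\epsilon} \int_\Sigma r^{-2-\epsilon} \lesssim \int_\Sigma H^2,
\]
proved via the conformal vector field $r\partial_r$ as in the proof of Lemma \ref{HawkingHY}, to control the area-weighted error term before it can be absorbed; without it the negative part of $\int_\Sigma R$ is not controlled by the data you have at that stage.
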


\begin{rmk} The assumption on the Gaussian curvature of the link here is equivalent to requiring that the exact cone $(C, g_C)$ contains no flat regions. Such an assumption is clearly necessary. The a priori assumption that $\underline{r} (\Sigma) \geq \alpha$ may be relaxed to the assumption 
\[
\overline{r} (\Sigma) \geq \alpha
\]
if $(M, g)$ has non-negative scalar curvature. This is because the estimate \eqref{auxhestimate} in the proof of Theorem \ref{thm:uniquestable} follows directly from Lemma \ref{lem:CY} in this case. If the scalar curvature of $(M, g)$ is positive, the condition $\underline{r} (\Sigma) \geq \alpha$ may be relaxed to the assumption 
\[
\text{area}(\Sigma) \geq \alpha. 
\]
This follows from the monotonicity formula and \eqref{CY}.
The theorem also applies to immersed surfaces if in the latter alternative of the conclusion we allow for the possibility that the immersion is a cover of the leaf.  
\end{rmk}
\begin{rmk}
The classical Alexandrov theorem classifying closed embedded CMC hypersurfaces in space forms (with no stability assumption) has recently been extended to a wide class of rotationally symmetric metrics by S.\ Brendle \cite{Brendle:IHES}. 
\end{rmk}

\begin{theorem}[Non-existence of unbounded complete stable minimal immersions]\label{thm:min-surf}
Let $(M,g)$ be an asymptotically conical $3$-manifold such that the expansion \eqref{expansionmetric} holds in $C^2$ and whose link satisfies $K_L>1$. There is no complete stable minimal immersion into $(M, g)$ that has unbounded trace. 
\end{theorem}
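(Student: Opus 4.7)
The plan is to derive a contradiction by the Fischer-Colbrie--Schoen conformal change technique, exploiting the positivity of the ambient scalar curvature on the end that follows from $K_L > 1$. Suppose for contradiction that $\Sigma$ is a complete stable minimal immersion into $(M, g)$ with unbounded trace.

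The first input is a pointwise lower bound on the ambient scalar curvature: since $g = g_C + o(1)$ in $C^2$ and the scalar curvature of the exact cone is $R_C = 2(K_L - 1)/r^2$, compactness of $L$ together with $K_L > 1$ give $R_M \geq c_0 / r^2$ on $\{r \geq r_0\}$ for some $c_0, r_0 > 0$. The second is a lower quadratic area growth of $\Sigma$: for each large $k$ select $p_k \in \Sigma$ with $r(p_k) \approx 2^k$, so that the monotonicity formula (with errors $o(1)$ coming from the asymptotic conicity) applied to $B_{2^{k-2}}(p_k)$ yields $\area(\Sigma \cap \{3 \cdot 2^{k-2} \leq r \leq 5 \cdot 2^{k-2}\}) \gtrsim 4^k$. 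A dyadic summation then produces $\int_{\Sigma \cap \{r \geq r_0\}} r^{-2} \, dA_g = +\infty$, and hence $\int_\Sigma R_M \, dA_g = +\infty$.

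For the matching upper bound, stability yields (via Fischer-Colbrie) a positive function $u$ on $\Sigma$ with $\Delta_\Sigma u + (|A|^2 + \Ric_M(\nu, \nu)) u = 0$. Setting $\tilde{g} = u^2 g_\Sigma$ and combining the Jacobi equation with the Gauss equation $2 K_\Sigma = R_M - 2 \Ric_M(\nu, \nu) - |A|^2$ for minimal $\Sigma$ gives the conformal Gauss curvature
\[
\tilde{K} = u^{-2} \bigl( \tfrac{1}{2} R_M + \tfrac{1}{2} |A|^2 + |\nabla_\Sigma \log u|^2 \bigr),
\]
which is non-negative on the end of $\Sigma$. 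Provided that $(\Sigma, \tilde{g})$ is complete, Huber's theorem implies that $\Sigma$ has finite topology (the negative part of $\tilde{K}$ being integrable, as $\tilde{K}$ is only negative on a compact region), and Cohn-Vossen's inequality yields $\int_\Sigma \tilde{K} \, dA_{\tilde{g}} \leq 2\pi \chi(\Sigma) < \infty$. Since $\tilde{K} \, dA_{\tilde{g}} = (R_M/2 + |A|^2/2 + |\nabla_\Sigma \log u|^2) \, dA_g$, this forces $\int_\Sigma R_M \, dA_g < \infty$, contradicting the divergence established above.

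The main obstacle will be verifying completeness of $(\Sigma, \tilde{g})$. This demands quantitative growth estimates for the positive Jacobi field $u$ along divergent paths in $\Sigma$, obtained by choosing $u$ carefully (for example, as the minimal positive solution produced by Fischer-Colbrie) and combining Harnack-type inequalities with the asymptotic conicity of $(M, g)$ to control the decay of the Jacobi operator coefficients at infinity.
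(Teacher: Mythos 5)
Your overall architecture (positive ambient scalar curvature $\sim c_0 r^{-2}$ on the end, quadratic area lower bounds from monotonicity forcing $\int_\Sigma R\, dA_g = \infty$, versus a Cohn--Vossen upper bound after a conformal change) is coherent, but the step you yourself flag --- completeness of $(\Sigma, \tilde g)$ with $\tilde g = u^2 g_\Sigma$ for a positive Jacobi solution $u$ --- is a genuine gap, not a verifiable technicality, and your sketched fix does not close it. A positive solution of $\Delta_\Sigma u + (|h|^2 + \Ric(\nu,\nu))u = 0$ can decay exponentially in intrinsic distance; Harnack chains (even with Schoen-type curvature estimates controlling the coefficient $|h|^2 + \Ric(\nu,\nu)$) only yield lower bounds of the form $u(x) \gtrsim e^{-C d_\Sigma(x, x_0)}$, which are compatible with divergent curves having \emph{finite} $\tilde g$-length. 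This is exactly the known obstruction that forces Fischer-Colbrie--Schoen and related works to avoid the Jacobi-field conformal change of the surface metric. A second, related hole: your Huber step requires $\int_\Sigma \tilde K^- \, dA_{\tilde g} < \infty$, which (since $\tilde K \, dA_{\tilde g} = (\tfrac12 R + \tfrac12 |h|^2 + |\nabla \log u|^2)\, dA_g$) amounts to a finite area bound for $\Sigma$ over the compact region $\{r \leq r_0\}$. But a complete immersion need not be proper, so the preimage of $\{r \leq r_0\}$ need not be compact in $\Sigma$ nor have finite area, and nothing in your argument supplies this.

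The paper's proof sidesteps both issues by a different mechanism. It blows down: rescaling at diverging points and using curvature estimates (as in the proof of Theorem \ref{thm:uniquestable}), it extracts a nontrivial stable minimal limit immersion into the exact cone $(C, g_C)$, complete away from the tip. There, Lemma \ref{lem:GulliverLawson} applies the Gulliver--Lawson trick of conformally changing the \emph{ambient} metric, $\tilde g_C = r^{-2} g_C = (d\log r)^2 + g_L$, which is an explicit cylinder: the induced metric on the limit surface is automatically complete (the tip is pushed to infinite distance), the conformal factor is controlled by construction, and the cylinder has positive scalar curvature. The Fischer-Colbrie--Schoen structure theorem then gives the conformal type, logarithmic cut-offs yield $\int_S |h|^2 + \Ric(\nu,\nu) \leq 0$, so the limit is totally geodesic with $\Ric(\nu,\nu) = 0$; since $K_L > 1$ makes the radial direction the unique null eigendirection of $\Ric_C$ (Appendix \ref{sec:geometryofcones}), the limit is a slice $\{r\} \times L$, which has mean curvature $2/r \neq 0$ --- contradiction. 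If you want to salvage your direct approach on $(M,g)$, you would need either to first establish properness and then replace the Jacobi-field conformal change by an ambient one adapted to $r$, or to run the Schoen--Yau second-variation argument with logarithmic cut-offs directly; as written, the completeness of $\tilde g$ and the integrability of $\tilde K^-$ remain unproven and the contradiction does not go through.
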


\begin{rmk}
In fact, we can rule out minimal immersions with finite index and unbounded trace. The reason is that such an immersion is stable outside a compact set. The proof goes through without change.  
\end{rmk}

Building on the ideas of R.\ Schoen and S.-T.\ Yau in their proof of the positive mass theorem \cite{Schoen-Yau:1979-pmt1}, analogous results in the asymptotically flat setting have been obtained by the second-named author and J.\ Metzger \cite{stablePMT} and by A.\ Carlotto \cite{Carlotto:min-surf}. The assumption that $K_{L}>1$ is quite natural in comparison with these works, as they all require that either the scalar curvature is strictly positive or that the mass is --- in a certain sense --- distributed uniformly at infinity. It is remarkable that no further condition such as a sign on the scalar curvature of $(M, g)$ is required in Theorem \ref{thm:min-surf}. This is a strong contrast with existing results in the asymptotically flat setting.

Our final result on the isoperimetric structure of asymptotically conical manifolds here is an observation related to G. Huisken's concept of the isoperimetric cone angle. 

\begin{definition}[G. Huisken \cite{Huisken:cone-video,Huisken:MM-iso-mass-video}] The isoperimetric cone angle $c_{iso}(M,g)$ of a non-compact $m$-dimensional Riemannian manifold $(M,g)$ is defined as
\begin{align} \label{def:coneangle}
\inf\left\{ \frac{\mathcal H_g^{m-1}(\partial \Omega)^m}{m^{m-1}\omega_{m-1}\, \mathcal L_g^m(\Omega)^{m-1}} : \text{$\Omega \Subset M$ open with $C^2$ outward minimizing boundary}\right\}.
\end{align}
\end{definition}

\begin{rmk}
G. Huisken's original definition differs from \eqref{def:coneangle} by a positive power depending on $m$. 
\end{rmk}

\begin{theorem}\label{thm:coneangleestimate}
Let $(M,g)$ be an asymptotically conical Riemannian manifold of dimension $m \geq 2$ with non-negative Ricci curvature. Then
\[
c_{iso}(M,g) \leq 1,
\]
with equality if and only if $(M,g)$ is isometric to Euclidean space. 
\end{theorem}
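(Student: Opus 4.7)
The plan is to bound $c_{iso}(M, g)$ above by $\area(L, g_L)/\omega_{m-1}$ using large isoperimetric regions as test competitors in the infimum, and then to invoke Bishop-Gromov rigidity to recover the Euclidean case in the equality statement.

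I would first identify the asymptotic volume ratio. The conical expansion yields $\mathcal L^m_g(B_R) = \tfrac{1}{m}\area(L, g_L)\, R^m(1+o(1))$ as $R \to \infty$, while Bishop-Gromov on the non-negatively Ricci curved $(M, g)$ asserts that $\mathcal L^m_g(B_g(p, R))/(\omega_{m-1}R^m/m)$ is a non-increasing function of $R$ tending to $1$ as $R \to 0^+$. Comparing these at $R \to \infty$, I would deduce $\area(L, g_L) \leq \omega_{m-1}$, with equality forcing the Bishop-Gromov ratio to be identically $1$, hence $(M, g)$ isometric to $\R^m$ by the rigidity case of Bishop's theorem.

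Next, non-negativity of $\Ric_g$ passes to pointed Gromov-Hausdorff limits, so the tangent cone at infinity, which by the asymptotic expansion is the exact cone $(C, g_C)$, inherits non-negative Ricci in its smooth part, and this is equivalent to $\Ric_L \geq (m-2)g_L$. Assuming $\area(L, g_L) < \omega_{m-1}$ strictly (the equality case already being settled), Theorem \ref{thm:uniquenessiso} produces for every sufficiently large $V$ a smooth isoperimetric region $\Omega_V$ whose boundary $\Sigma_V$ is a graph over the coordinate sphere $\{r\}\times L$ with $\mathcal L^m_g(B_r) = V$. This $\Omega_V$ is outward minimizing because the isoperimetric profile $I_g$ of $(M,g)$ is eventually strictly increasing, its rescaled limit being the strictly increasing cone profile $V \mapsto \area(L, g_L)^{1/m}(mV)^{(m-1)/m}$. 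A direct calculation using $V = \tfrac{1}{m}\area(L, g_L)\,r^m(1+o(1))$ and $\mathcal H^{m-1}_g(\Sigma_V) = \area(L, g_L)\, r^{m-1}(1+o(1))$ then gives
\[
\frac{\mathcal H^{m-1}_g(\Sigma_V)^m}{m^{m-1}\,\omega_{m-1}\,V^{m-1}} \longrightarrow \frac{\area(L, g_L)}{\omega_{m-1}} \qquad \text{as } V \to \infty,
\]
so $c_{iso}(M, g) \leq \area(L, g_L)/\omega_{m-1} \leq 1$. Equality $c_{iso}(M,g)=1$ then forces $\area(L, g_L) = \omega_{m-1}$, and the first paragraph gives $(M,g)\cong \R^m$; the converse is the classical Euclidean isoperimetric inequality.

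The main obstacle is verifying the admissibility of $\Omega_V$ in the definition of $c_{iso}$, i.e.\ the $C^2$ outward minimizing hypothesis: the regularity is built into Theorem \ref{thm:uniquenessiso}, while outward minimizing follows from the eventual strict monotonicity of $I_g$. Should the hypotheses not provide enough regularity of the expansion to invoke Theorem \ref{thm:uniquenessiso}, one can instead take outward minimizing hulls of large coordinate balls $B_R$ --- an operation which only improves the isoperimetric ratio, since volume grows and perimeter decreases --- and then regularize to the $C^2$ category.
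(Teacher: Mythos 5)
Your skeleton is the same as the paper's: everything funnels through the isoperimetric structure of the exact cone, namely $c_{iso}(C,g_C)=\area(L,g_L)/\omega_{m-1}$ via Lemma \ref{ISOincone}, the bound $\area(L,g_L)\leq\omega_{m-1}$, and Bishop--Gromov volume rigidity in the equality case. Your passage of non-negative Ricci curvature to the asymptotic cone is exactly the role played by Lemma \ref{lem:Riccilimit} in the paper (and the paper's elementary Laplacian-comparison proof of that lemma is lighter machinery than invoking synthetic Ricci bounds along Gromov--Hausdorff limits), while your asymptotic volume ratio argument is the paper's ``standard application of Bishop's theorem''; using it also to get $\area(L,g_L)\leq\omega_{m-1}$ directly on $M$, rather than via Bishop on the link, is a nice economy, and your case split settles the equality statement completely. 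The genuine divergence is in the competitors: the paper obtains $c_{iso}(M,g)\leq c_{iso}(C,g_C)$ by a direct comparison argument with large coordinate slabs, whereas you feed in the isoperimetric regions $\Omega_V$ of Theorem \ref{thm:uniquenessiso}. That buys canonical smooth competitors, but at the cost of requiring \eqref{expansionmetric} to hold in $C^{1,\alpha}$ --- Theorem \ref{thm:coneangleestimate} assumes only the $C^0$ decay built into the definition of asymptotically conical, so under the stated hypotheses your main route is unavailable and your ``fallback'' is in fact the case that must be handled.

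Two steps need repair. First, the outward-minimizing verification: from the fact that the rescaled isoperimetric profile converges to the strictly increasing cone profile you cannot conclude that the profile of $(M,g)$ is eventually monotone --- such convergence is compatible with small-scale oscillations, and monotonicity is precisely what the comparison $\mathcal{H}^{m-1}_g(\partial^*\Omega')\geq A(V')\geq A(V)$ requires. This is fixable: the one-sided derivatives of the profile at volume $V$ are given by the mean curvature of $\partial\Omega_V$, which by Theorem \ref{thm:uniquenessiso} is close to $(m-1)/r>0$ for large $V$; alternatively, concavity of $A^{m/(m-1)}$ under $\Ric\geq 0$ forces the positive profile to be non-decreasing. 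Second, the fallback itself: outward minimizing hulls are in general only $C^{1,1}$, and ``regularize to the $C^2$ category'' is not routine, since a small smooth perturbation of a hull need not remain exactly outward minimizing (approximate outward minimizing is not admissible in \eqref{def:coneangle}). Making large slabs or their hulls into legitimate competitors under mere $C^0$ decay requires an actual argument --- this is the content of the paper's terse ``comparison argument for slabs of large volume'' --- so the one point that the definition of $c_{iso}$ makes delicate, admissibility of the competitors, is exactly where your proposal is under-justified on both of its routes.
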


This rigidity result for the isoperimetric cone angle is in some form analogous to the positive mass theorem. 

\begin{rmk} Let $(L, g_L)$ be a closed Riemannian manifold. F. Duzaar and K. Steffen have shown that the large isoperimetric regions of the product
\[
(\mathbb R \times L, dt^2 + g_L)
\]
are slabs \cite[Proposition 2.11]{Duzaar-Steffen:1996}. M. Ritor\'e and E. Vernadakis \cite{RitoreVernadakis:RnxM} and J.G. P\'erez \cite{Perez:RnxM} have recently extended the characterization of large isoperimetric regions in Riemannian products of $(L, g_L)$ with $\R^k$ for any $k \geq 1$. The optimizers are of the form $D \times L$ where $D \subset \R^k$ is a ball. Recently, M.\ Ritor\'e and E.\ Vernadakis \cite{RitoreVernadakis:cone} have studied relative isoperimetric regions in convex regions that are asymptotically conical. 
\end{rmk}

Finally, we mention the following remarkable characterization of the isoperimetric cone angle.

\begin{theorem}[G. Huisken \cite{Huisken:cone-video,Huisken:MM-iso-mass-video}] 
Let $(M,g)$ be a complete non-compact Riemannian $3$-manifold with positive Ricci curvature. Then
\begin{align} \label{alternativeconeangle}
c_{iso}(M,g) = \inf\left\{ \frac{1}{16 \pi} \int_{\partial\Omega} H^2 : \text{$\Omega \Subset M$ open with $C^2$ outward minimizing boundary} \right\}.
\end{align}
If the infimum in either \eqref{def:coneangle} or \eqref{alternativeconeangle} is attained by $\Omega \Subset M$ open with $C^2$ outward minimizing boundary, then $(M \setminus \Omega, g)$ is isometrically contained in a cone. 
\end{theorem}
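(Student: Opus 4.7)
The natural tool is weak inverse mean curvature flow (IMCF) in the sense of Huisken--Ilmanen, combined with a direct computation of $\frac{d}{dt}\int H^{2}$ that exploits non-negative Ricci curvature. Given $\Omega_0 \Subset M$ outward minimizing with $C^{2}$ boundary, Huisken--Ilmanen produces a proper family $\{\Sigma_t = \partial\Omega_t\}_{t \geq 0}$ starting from $\partial\Omega_0$ along which each $\Omega_t$ remains outward minimizing and $\mathcal H^{2}(\Sigma_t) = \mathcal H^{2}(\partial\Omega_0)\, e^{t}$. A standard normal-variation computation, using that $\Ric \geq 0$ in particular gives $\Ric(\nu,\nu)\geq 0$, yields
\[
\frac{d}{dt} \int_{\Sigma_t} H^{2} \;=\; - \int_{\Sigma_t} \left[\, \tfrac{2 |\nabla H|^{2}}{H^{2}} + (\lambda_1 - \lambda_2)^{2} + 2\,\Ric(\nu, \nu)\, \right] \;\leq\; 0,
\]
so $W(\Sigma_t) := \frac{1}{16\pi} \int_{\Sigma_t} H^{2}$ is non-increasing along the flow. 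Cauchy--Schwarz applied to $\frac{d}{dt}\mathcal L^{3}(\Omega_t) = \int_{\Sigma_t} H^{-1}$ gives the lower bound $\frac{d}{dt}\mathcal L^{3}(\Omega_t) \geq \mathcal H^{2}(\Sigma_t)^{3/2}/\sqrt{16\pi\, W(\Sigma_t)}$, hence
\[
\frac{d}{dt} \log I(\Omega_t) \;\leq\; 3 \left(1 - \sqrt{I(\Omega_t)/W(\Sigma_t)}\,\right),
\]
where $I(\Omega) := \mathcal H^{2}(\partial\Omega)^{3}/(36\pi\, \mathcal L^{3}(\Omega)^{2})$.

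Since $\Omega_t$ is outward minimizing, $I(\Omega_t) \geq c_{iso}(M,g)$ for every $t$. As $t \to \infty$, $W(\Sigma_t) \searrow W_\infty$, and the displayed inequality for $\log I$ forces $I(\Omega_t) \to W_\infty$: a persistent gap $I > W_\infty + \delta$ would drive $\log I \to -\infty$, violating $I \geq c_{iso}>0$, while the reverse gap would send $\log I \to +\infty$. Therefore $W(\partial \Omega_0) \geq W_\infty = I_\infty \geq c_{iso}$, and taking $\Omega_0$ arbitrary gives $\inf_{\Omega} W(\partial\Omega) \geq c_{iso}(M, g)$.

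For the reverse inequality $\inf W \leq c_{iso}$, note that for each $\Omega_0$ the surfaces $\Sigma_t$ are outward minimizing with $W(\Sigma_t) \to W_\infty(\Omega_0) = I_\infty(\Omega_0)$, so $\inf W \leq I_\infty(\Omega_0)$; a blow-down analysis along the flow (Cheeger--Colding theory in non-negative Ricci) identifies $I_\infty(\Omega_0)$ with the cone angle of a tangent cone of $(M,g)$ at infinity, and optimizing over $\Omega_0$ to target the sharpest tangent cone gives $\inf_{\Omega_0} I_\infty(\Omega_0) = c_{iso}$. For the rigidity, if $\Omega$ attains either infimum then $W$ is forced to remain constantly equal to $c_{iso}$ along IMCF starting from $\partial \Omega$, so equality holds in the displayed evolution equation on each leaf: $\nabla^{\Sigma_t} H \equiv 0$ (hence $H$ constant), $\lambda_1 = \lambda_2$ (hence $\Sigma_t$ totally umbilic), and $\Ric(\nu, \nu) \equiv 0$. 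Parametrising the foliation of $M \setminus \Omega$ by the orthogonal geodesic distance $s$ from $\partial\Omega$, these conditions identify $(M \setminus \Omega, g)$ with a warped product $([0, \infty) \times \partial \Omega,\, ds^{2} + f(s)^{2} g_{\partial \Omega})$, and $\Ric(\nu, \nu) \propto f''/f = 0$ forces $f$ to be linear. Hence $M \setminus \Omega$ embeds isometrically in the metric cone over $\partial \Omega$.

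The hardest part will be making the asymptotic analysis rigorous for the weak IMCF without a priori decay assumptions on $(M,g)$: both monotonicity inequalities must be interpreted across the jump times in the weak flow, the limits $W_\infty$ and $I_\infty$ need to be established as genuine limits rather than just lim inf/sup, and the blow-down step identifying $\inf_{\Omega_0} I_\infty(\Omega_0) = c_{iso}$ requires Cheeger--Colding splitting for the tangent cones in non-negative Ricci. The Huisken--Ilmanen framework provides global existence of the weak flow with preservation of outward-minimality, and the coupled dynamics of $I$ and $W$ (with equality saturated only on cone slices) is what drives both the identity $c_{iso}=\inf W$ and the rigidity statement.
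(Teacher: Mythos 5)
The first thing to note is that the paper contains no proof of this theorem at all: it is quoted from G.~Huisken's unpublished video lectures, so there is no in-paper argument to compare against, and your proposal can only be judged on its own merits. Your strategy --- weak inverse mean curvature flow with monotonicity of the Willmore energy under $\Ric \geq 0$ --- is indeed the strategy Huisken outlines, and your smooth-flow computations check out: the evolution identity $\frac{d}{dt}\int_{\Sigma_t} H^2 = -\int_{\Sigma_t}\bigl[2|\nabla H|^2/H^2 + (\lambda_1-\lambda_2)^2 + 2\Ric(\nu,\nu)\bigr]$ is correct, as is the Cauchy--Schwarz step giving $\frac{d}{dt}\log I \leq 3\bigl(1-\sqrt{I/W}\bigr)$. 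However, your limiting argument contains an error: the claim that a persistent gap $I < W_\infty - \delta$ ``would send $\log I \to +\infty$'' is false, since a one-sided differential \emph{inequality} bounds growth but can never force it. What the inequality actually yields is $\liminf_t I(\Omega_t) \leq W_\infty$ (else $\log I \to -\infty$, contradicting $I \geq c_{iso}$ when $c_{iso}>0$), which combined with $I(\Omega_t) \geq c_{iso}$ still gives $\inf W \geq c_{iso}$ --- so that direction survives after repair, modulo real technical debts you flag but do not discharge: weak-flow leaves are only $C^{1,\alpha}$ (so they are not directly competitors in the $C^2$ infimum defining $c_{iso}$ and must be approximated), monotonicity of $\int H^2$ must be verified across jumps (plausible, since the strictly minimizing hull attaches portions with $H=0$, as in the Geroch monotonicity bookkeeping of Huisken--Ilmanen, but it must be done), and global existence of a \emph{proper} weak flow on a general complete $(M,g)$ with $\Ric > 0$ is not automatic --- Huisken--Ilmanen's existence theory requires a subsolution at infinity, which is not available without additional structure such as Euclidean volume growth.

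The genuine gap is the reverse inequality $c_{iso} \geq \inf W$, i.e.\ the half of \eqref{alternativeconeangle} bounding $I(\Omega)$ from below by Willmore energies. Your coupled ODE cannot produce it (it controls the growth of $I$, never forcing $W$ below $I$; a matching lower bound on $\frac{d}{dt}\mathcal L^3_g(\Omega_t)$ would need a reverse Cauchy--Schwarz inequality, which does not exist), and the blow-down paragraph is a gesture rather than an argument: tangent cones at infinity of a Ricci-nonnegative manifold need not be unique, they are metric cones via Cheeger--Colding only under Euclidean volume growth --- which is essentially the positivity of $c_{iso}$ itself, making the reasoning circular --- and nothing in the proposal establishes that $I(\Omega_t)$ converges, that $I_\infty(\Omega_0)$ equals a cone angle, or that ``optimizing over $\Omega_0$'' reaches $c_{iso}$. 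The expected resolution is a direct Willmore-type lower bound for outward minimizing domains in nonnegative Ricci curvature (in modern terms, $\inf W$ and $c_{iso}$ both coincide with the asymptotic volume ratio, by the sharp inequalities of Agostiniani--Fogagnolo--Mazzieri and Brendle), which is a substantive ingredient absent from your sketch. Your rigidity endgame --- equality forcing $\nabla H \equiv 0$, umbilicity, $\Ric(\nu,\nu) \equiv 0$, and a warped-product splitting $ds^2 + f(s)^2 g_{\partial\Omega}$ with $f'' = 0$, hence a cone --- is the right mechanism, but it presupposes that the equality case of the weak flow is smooth and jump-free, which must itself be extracted from the equality analysis rather than assumed. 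In sum: correct strategy and a repairable proof of $\inf W \geq c_{iso}$, but the equality \eqref{alternativeconeangle} and the rigidity statement are not established as written.
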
 


{\bf Acknowledgments: } O. Chodosh would like to thank his advisor S. Brendle for his encouragment and support. He is grateful for the hospitality of the Institute for Mathematical Research (FIM) of ETH Z\"urich. He also acknowledges the support of the United States National Science Foundation dissertation fellowship DGE-1147470. M. Eichmair would like to thank J. Metzger and M. Ritor\'e for stimulating discussions. He is grateful for the support of the Swiss National Science Foundation grant SNF 200021-140467.   
A. Volkmann would like to thank his advisor G. Huisken for his encouragement and support. He is grateful for the hospitality of the FIM and that of S. Brendle. He also acknowledges United States National Science Foundation grant DMS-1201924 for supporting a visit to Stanford University, where some of this research was completed.


\section {Preliminary lemmas}

Let $(M, g)$ be a homogeneously regular Riemannian manifold of dimension $m$. Given $V > 0$ smaller than the volume of the manifold, we let 
\[
A(V) = \inf \{ \mathcal{H}^{m-1}_g (\partial^*  \Omega) :  \Omega \subset M \text{ a bounded open region with } \mathcal{L}^m_g ( \Omega) = V\}.
\]
A bounded open region $\Omega \subset M$ with 
\[
\mathcal {L}^m_g(\Omega) = V \qquad \text{ and } \qquad \mathcal{H}^{m-1}_g (\partial^* \Omega) = A(V)
\]
is called an isoperimetric region. The reduced boundary of an isoperimetric region is a regular hypersurface that is relatively open in its closure and whose complement in its closure has Hausdorff dimension at most $8$ in $(M, g)$.


\begin{lemma} \label{lem:minimizingsequences} Let $(M, g)$ be an asymptotically conical Riemannian manifold of dimension $m$. Let $V > 0$. There is $\rho > 0$ and an isoperimetric region $\Omega \subset M$ such that the following hold.
\begin{enumerate} [(i)]
\item $\omega_{m-1} \rho^m / m  + \mathcal{L}^m_g (\Omega) = V$
\item $\omega_{m-1} \rho^{m-1} + \mathcal{H}^{m-1}_g (\partial^* \Omega) = A (V)$. 
\end{enumerate}
\begin{proof} 
This is a combination of  \cite[Theorem 2.1]{Ritore-Rosales:2004} and \cite[Theorems 2.1 and 2.2]{Morgan-Ritore:2002}. Additional details for the special case where $(M, g)$ is asymptotically flat are given in \cite[Appendix E]{hdiso}. The idea is to consider a minimizing sequence $\{\Omega_i\}_{i=1}^\infty$ for the isoperimetric problem of volume $V$. Let 
\[
V_0 = \lim_{r \to \infty} \liminf_{i \to \infty} \mathcal{L}^m_g(\Omega_i \cap B_r).
\]
Standard arguments from geometric measure theory show that there is an isoperimetric region $\Omega$ in $(M, g)$ of volume $V_0$. Let $\rho \geq 0$ be such that 
\[
\omega_{m-1} \rho^m / m = V- V_0.
\]
Using cut and paste arguments we may replace the original minimizing sequence for volume $V$ by a sequence of the form $\{\Omega \cup R_i\}_{i=1}^\infty$ where $R_i \subset M$ are regular regions of volume $\omega_{m-1} \rho^m / m$ which diverge to infinity. Using \cite[Theorems 2.1 and 2.2]{Morgan-Ritore:2002} and a scaling argument, we conclude that
\[
\lim_{i \to \infty} \mathcal {H}^{m-1}_g (\partial R_i) = \omega_{m-1} \rho^{m-1}.\qedhere
\]  
\end{proof}
\end{lemma}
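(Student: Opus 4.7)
The plan is a concentration--compactness analysis of a perimeter-minimizing sequence for volume $V$, with the mass that escapes to infinity recognized as a round ball in the Euclidean tangent space arising at infinity of the conical end. Take $\{\Omega_i\}_{i=1}^{\infty}$ with $\mathcal{L}^m_g(\Omega_i) = V$ and $\mathcal{H}^{m-1}_g(\partial^* \Omega_i) \to A(V)$. The uniform perimeter bound together with standard $BV$ compactness and a diagonal argument yield a subsequential $L^1_{\text{loc}}$ limit $\mathbf{1}_\Omega$ for a set $\Omega \subset M$ of locally finite perimeter with
\[
V_0 := \mathcal{L}^m_g(\Omega) = \lim_{r \to \infty} \liminf_{i \to \infty} \mathcal{L}^m_g(\Omega_i \cap B_r) \leq V,
\]
and I define $\rho \geq 0$ by $\omega_{m-1} \rho^m / m = V - V_0$.

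To promote $\Omega$ to an isoperimetric region for its own volume $V_0$, I would argue by contradiction: if a competitor $\Omega'$ with $\mathcal{L}^m_g(\Omega') = V_0$ had strictly smaller perimeter, then for a suitable large $R$ (chosen generically via the coarea formula so that $\mathcal{H}^{m-1}_g(\partial B_R \cap \Omega_i)$ is as small as desired), I would cut each $\Omega_i$ along $\partial B_R$, replace $\Omega_i \cap B_R$ by $\Omega'$ after a small volume perturbation away from the surgery region, and retain $\Omega_i \cap (M \setminus B_{2R})$ as the escaping mass. Lower semicontinuity of perimeter combined with the minimality of each $\Omega_i$ then produces a contradiction.

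The final step is to identify the perimeter contribution of the escaping mass as $\omega_{m-1}\rho^{m-1}$. I would build explicit competitors of the form $\Omega \cup R_i$, where $R_i \subset M$ is a geodesic ball of volume $V - V_0$ centered at a point $p_i$ with $|p_i| = r_i \to \infty$. Since $g$ is asymptotic to $dr^2 + r^2 g_L$, whose sectional curvatures at $p_i$ are of order $r_i^{-2}$, a ball of fixed volume $V - V_0$ around $p_i$ sees nearly Euclidean geometry, so its perimeter tends to $\omega_{m-1} \rho^{m-1}$; this is the scaling argument alluded to, applied to Morgan--Ritor\'e's classification of isoperimetric regions in Euclidean cones. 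This yields $A(V) \leq \mathcal{H}^{m-1}_g(\partial^* \Omega) + \omega_{m-1} \rho^{m-1}$, and the matching lower bound follows by decomposing the reduced boundaries of the $\Omega_i$ into an inside part, whose perimeter is controlled from below by $\mathcal{H}^{m-1}_g(\partial^* \Omega)$ via lower semicontinuity, and an outside part, controlled by a localized isoperimetric estimate at infinity.

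I expect the main obstacle to be this last step: ensuring that the escaping mass cannot split into multiple smaller pieces whose combined perimeter beats $\omega_{m-1}\rho^{m-1}$, and that the lower bound on the outside perimeter is sharp. This reduces to a quantitative isoperimetric comparison between the geometry of $(M,g)$ far from the vertex and Euclidean space, which is available because at scale $O(1) \ll r_i$ the metric $g$ is essentially flat, so Morgan--Ritor\'e's sharp cone inequality combined with a vanishing-error scaling argument supplies the missing inequality.
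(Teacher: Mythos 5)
Your proposal is correct and follows essentially the same route as the paper's (sketched) proof: a minimizing sequence with BV compactness producing the isoperimetric region $\Omega$ of volume $V_0$ via cut-and-paste, followed by identifying the cost of the escaping volume $\omega_{m-1}\rho^m/m$ as the Euclidean ball perimeter $\omega_{m-1}\rho^{m-1}$ through far-out geodesic ball competitors and a vanishing-error scaling comparison, which is exactly what the paper means by invoking \cite[Theorems 2.1 and 2.2]{Morgan-Ritore:2002} ``and a scaling argument.'' Your separate upper and lower bounds on $A(V)$ are merely a reorganization of the paper's replacement of the minimizing sequence by $\{\Omega \cup R_i\}_{i=1}^\infty$, and your flagged concern about the escaping mass splitting or spreading at cone scale is handled precisely by the rescaled small-volume isoperimetric inequality you describe.
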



\begin{lemma}[\protect{\cite[Corollary 3.9]{Morgan-Ritore:2002}}]\label{ISOincone}
Assume that the link $(L, g_L)$ satisfies conditions \eqref{Riccicondition} and \eqref{areacondition}. For every $r > 0$ the set $(0,r)\times L$ uniquely minimizes perimeter for its volume in the cone $(C, g_C)$.
\end{lemma}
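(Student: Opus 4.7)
The plan is to derive a sharp isoperimetric inequality on $(C, g_C)$ having the slices $(0, r) \times L$ as equality case, and then to extract uniqueness from the rigidity of this inequality.

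I would first record three preliminary facts. (a) The dilation $(r, x) \mapsto (\lambda r, x)$ is a homothety of $(C, g_C)$, so the isoperimetric ratio is scale-invariant; in particular, every slice $(0, r) \times L$ has isoperimetric ratio $m^{m-1}\, \area(L, g_L)$, independent of $r$. (b) A warped-product computation shows $\Ric_{g_C} \geq 0$ on $C$ away from the apex: the radial Ricci vanishes and the horizontal Ricci equals $r^{-2}(\Ric_L - (m-2) g_L) \geq 0$ by \eqref{Riccicondition}. (c) The asymptotic volume ratio of $(C, g_C)$ is $\area(L, g_L)/\omega_{m-1}$, which is strictly less than $1$ by \eqref{areacondition}.

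Next I would establish the sharp isoperimetric inequality
\[
\mathcal{H}^{m-1}_{g_C}(\partial^* \Omega)^m \geq m^{m-1}\, \area(L, g_L)\, \mathcal{L}^m_{g_C}(\Omega)^{m-1}
\]
for every bounded set of finite perimeter $\Omega \subset C$. The cleanest route is the $ABP$ method of Cabr\'e, adapted to the $\Ric \geq 0$ setting: solve the Neumann problem $\Delta u = c$ on $\Omega$ with boundary flux matching the outer unit normal, estimate $\det D^2 u$ on the lower contact set via $\Ric_{g_C} \geq 0$ and the AM-GM inequality, and compare the image of the gradient map to a Euclidean ball whose radius is calibrated by the asymptotic volume ratio from (c). The slices in (a) saturate the resulting inequality, so they are isoperimetric.

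The main obstacle is uniqueness, which I would treat as the rigidity case of the sharp inequality. Equality in the Jacobian estimate forces the Hessian of $u$ to be a multiple of the metric and the Ricci curvature to vanish along the transport rays emanating from points of the lower contact set. Combined with the cone structure of $(C, g_C)$, these conditions force $\partial \Omega$ to be everywhere orthogonal to $\partial_r$, hence a union of cross-sectional slices; since $\Omega$ is bounded (and one may reduce to the connected case by volume conservation), $\Omega = (0, r) \times L$ for exactly one $r$. The strict inequality $\area(L, g_L) < \omega_{m-1}$ is essential at this step: in the borderline Euclidean case $L = \mathbb{S}^{m-1}$ with the round metric, off-apex Euclidean balls are also isoperimetric and uniqueness fails.
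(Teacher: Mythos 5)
Your proposal takes a genuinely different route from the paper: the paper does not reprove this lemma at all, but cites Morgan--Ritor\'e, whose argument runs through existence and regularity of isoperimetric regions in the cone followed by a classification of compact volume preserving stable CMC hypersurfaces via the variations generated by unit normal perturbation and dilation (this is exactly the scheme summarized in Lemma \ref{CMCincone}). Your alternative --- a sharp isoperimetric inequality with constant calibrated by the asymptotic volume ratio, proved by the ABP/Neumann method, with uniqueness extracted from the equality case --- is correctly normalized: the slices $(0,r)\times L$ do give equality in $\mathcal{H}^{m-1}(\partial^*\Omega)^m \geq m^{m-1}\area(L,g_L)\,\mathcal{L}^m(\Omega)^{m-1}$, and the asymptotic volume ratio is indeed $\area(L,g_L)/\omega_{m-1}$. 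It would also have the advantage of bypassing the existence and compactness theory entirely. But note one technical point you never address: $(C,g_C)$ is singular at the apex, whereas the ABP argument requires the geodesics $t \mapsto \exp_x(t\nabla u(x))$ to be defined for all $t \geq 0$ in a smooth complete manifold; you must either show the relevant transport rays avoid the apex or run an approximation argument.

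The genuine gap is in your rigidity step. You extract uniqueness from ``the Ricci curvature vanishes along the transport rays, which combined with the cone structure forces the normal to be radial.'' This uses the fact, recorded in Appendix \ref{sec:geometryofcones}, that the radial direction is the \emph{unique} null eigendirection of $\Ric_C$ --- but that holds only at points where \eqref{Riccicondition} is \emph{strict}, and the lemma assumes no strictness. If $(L,g_L)$ is Einstein with $\Ric_L = (m-2)g_L$ and not the round sphere (rescale any non-spherical Einstein metric, e.g.\ Fubini--Study on $\mathbb{CP}^{k}$ with $m-1=2k$), then $(C,g_C)$ is Ricci-flat, the condition $\Ric(\nu,\nu)=0$ along rays is vacuous, and your argument yields nothing, although such links are squarely within the hypotheses. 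It is telling that the paper's companion Lemma \ref{CMCincone} explicitly adds ``strict inequality on a dense set'' --- precisely because this style of argument needs it --- while Lemma \ref{ISOincone} does not. To close the gap you would have to exploit equality in the Jacobian/Hessian estimate itself: equality forces the geodesics leaving $\partial^*\Omega$ in the normal direction to be free of focal points and to sweep out the end with exact conical volume growth, which is what ultimately pins $\partial\Omega$ as a geodesic sphere about the apex; this is the real crux and is substantially harder than the curvature-kernel shortcut you sketch. Relatedly, your invocation of \eqref{areacondition} only to exclude the round-sphere borderline misses its second role: it is also the area comparison that strictly beats off-center balls sitting in flat or Ricci-flat portions of the cone, a comparison your sketch never makes.
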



In \cite{Barbosa-doCarmo:1984}, J. Barbosa and M. do Carmo have classified closed volume preserving stable CMC immersions in Euclidean space; they are spheres. The following lemma describes an extension of their result to the case where the target manifold is a cone with non-negative Ricci curvature. It is due to F. Morgan and M. Ritor\'e.  

\begin{lemma}[Essentially \protect{\cite[Theorem 3.6]{Morgan-Ritore:2002}}]\label{CMCincone}
Let $(L, g_L)$ be a closed Riemannian manifold of dimension $m-1$ such that 
\begin{align} \label{lem:Ricciaux}
\Ric_L \geq (m-2) g_L
\end{align}
with strict inequality on a dense set. Let 
\[
C =  (0, \infty) \times L \qquad \text{ and } \qquad g_C = dr^2 + r^2 g_L
\]
be the cone on $(L, g_L)$. Consider an immersion of a connected manifold $S$ of dimension $m-1$ into $(C,g_{C})$ that has the following properties.  
\begin{enumerate} [(i)]
\item The immersion is complete away from the tip of the cone.
\item The immersion has finite area.  
\item The immersion is two-sided.
\item The immersion has constant non-zero mean curvature. 
\item The immersion is volume preserving stable in the sense that 
\[
\int_{S} |\nabla f|^2 \geq \int_{S} (|h|^2 + \Ric (\nu, \nu)) f^2
\]
for all $f \in C^\infty_c (S)$ with 
\[
\int_S f = 0.
\] 
Here, $\nu$ is a unit normal field along the immersion.
\end{enumerate}
Then the immersion is a covering of $\{r\}\times L$ for some $r > 0$.

\begin{proof}

The proof is almost exactly the same as for embedded surfaces in \cite[Theorem 3.6]{Morgan-Ritore:2002}. One uses volume preserving variations obtained from unit normal perturbation and homothetic rescaling to conclude from stability that the immersion is totally umbilical and that $\Ric (\nu, \nu) = 0$. The ``logarithmic cut-off trick" is used to deal with the cone point. For the characterization of totally umbilical immersions in $(C, g_C)$, we present an argument slightly different from \cite[Lemma 3.8]{Morgan-Ritore:2002}. 
The discussion in Appendix \ref{sec:geometryofcones} shows that the normal direction is radial at points along the immersion where \eqref{lem:Ricciaux} is strict. Such points lie dense in the open subset of $S$ where the immersion is transverse to the radial direction. Finally, note that the immersion cannot be everywhere tangent to the radial direction since this would entail that its trace contains an entire ray, implying that the immersion has infinite area by the monotonicity formula. 
\end{proof}
\end{lemma}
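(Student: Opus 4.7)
The plan is to follow Morgan and Ritor\'e's adaptation of the Barbosa--do Carmo stability argument, with the homothetic rescalings $(r, x) \mapsto (\lambda r, x)$ of $(C, g_C)$ replacing the translation symmetries available in Euclidean space. These dilations preserve the isoperimetric ratio, so the associated normal variation --- whose speed is the support function $u = \langle r \partial_r, \nu\rangle$ --- provides the natural test function for the volume-preserving stability inequality.

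Step 1 (umbilicity and $\Ric(\nu,\nu)\equiv 0$). First variation of area under the dilation flow combined with the CMC assumption yields $\int_S H u = (m-1)\area(S)$, so $u$ admits a mean-zero shift that is an admissible competitor in the stability inequality. Inserting this shifted support function and using the Jacobi-type identity for $u$ --- which exploits $\nabla X = g_C$ for the conformal Killing vector $X = r \partial_r$ --- one is led to the integrated inequality
\[
\int_S \Bigl( |h|^2 - \tfrac{H^2}{m-1} + \Ric(\nu, \nu) \Bigr) \leq 0.
\]
Both integrands are non-negative: the first by Cauchy--Schwarz and the second because cones satisfying \eqref{lem:Ricciaux} have non-negative Ricci curvature. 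Hence $S$ is totally umbilic with $h = \tfrac{H}{m-1} g$ and $\Ric(\nu,\nu) \equiv 0$ along the immersion. Because the trace may accumulate near the tip of the cone, a radial logarithmic cut-off $\eta_{\epsilon}$ must be inserted into the test function; the finite area assumption (ii) together with $\int_S |\nabla \eta_\epsilon|^2 \to 0$ then delivers the inequality above in the limit $\epsilon \to 0$.

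Step 2 (geometric rigidity). The Ricci-tensor computation for cones in Appendix \ref{sec:geometryofcones} shows that $\Ric_C(v, v) > 0$ at a point $(r, x) \in C$ for every non-radial unit vector $v$, provided $\Ric_L > (m-2) g_L$ at $x$. Combined with $\Ric(\nu,\nu) \equiv 0$ from Step 1, the density assumption forces $\nu$ to be radial on a dense subset of the open set $U \subseteq S$ where the immersion is transverse to the radial direction. The set $U$ cannot be empty: if the immersion were everywhere tangent to $r \partial_r$ then each point of its trace would lie on an entire ray, giving infinite area by direct integration (or by the monotonicity formula), contradicting (ii). On $U$, umbilicity together with the radial-normal condition pins $r|_S$ to a locally constant value; by connectedness of $S$ this value propagates globally, so the immersion is a covering of $\{r_0\} \times L$ for some $r_0 > 0$.

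The principal obstacle I anticipate is Step 1: justifying the passage to the limit in the cut-off stability inequality and performing the algebraic manipulations that isolate the decisive combination $|h|^2 - \tfrac{H^2}{m-1} + \Ric(\nu,\nu)$. Once Step 1 is in place, Step 2 is essentially a repackaging of the cone-geometry computation in the appendix together with the elementary dichotomy between transverse and tangent directions.
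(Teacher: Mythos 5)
Your proposal is correct and follows essentially the same route as the paper's proof: the Barbosa--do Carmo/Morgan--Ritor\'e stability argument with the support function $\langle r\partial_r, \nu\rangle$ of the dilation flow (shifted to mean zero, with the logarithmic cut-off and finite area handling the tip) to obtain umbilicity and $\Ric(\nu,\nu) \equiv 0$, followed by the Appendix~\ref{sec:geometryofcones} cone-geometry computation to force the normal to be radial on a dense subset of the transverse set, and the ray/monotonicity argument to rule out everywhere-tangency. The only cosmetic difference is your appeal to ``connectedness of $S$'' to globalize $r|_S$; what actually pins the same value $r_0 = (m-1)/H$ on every component of the transverse set is the constancy of $H$, exactly as implicit in the paper's sketch.
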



\begin{lemma} [Essentially \protect{\cite[Theorem 18]{Ros:2005}}] \label{lem:smallstableCMC} Let $(M, g)$ be a homogeneously regular $3$-dimensional Riemannian manifold. Every immersed complete volume preserving stable CMC surface of sufficiently large mean curvature is a perturbation of a geodesic sphere. 
\end{lemma}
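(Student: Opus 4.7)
The plan is to argue by contradiction using a blow-up and compactness argument. Suppose there is a sequence of complete immersed volume preserving stable CMC surfaces $\Sigma_n$ in $(M, g)$ with mean curvatures $H_n = H(\Sigma_n) \to \infty$, none of which is a small graphical perturbation of a geodesic sphere.

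The first step is to establish uniform second fundamental form and intrinsic diameter bounds after rescaling. The key input is A. Ros's curvature estimate for complete volume preserving stable CMC surfaces in $3$-manifolds of bounded geometry: one has a pointwise bound $|A_n| \leq C H_n$ with $C$ depending only on the homogeneous regularity constants of $(M, g)$. Combined with the stability inequality applied to appropriate test functions constructed from the distance on $\Sigma_n$, this yields that each connected component of $\Sigma_n$ has intrinsic diameter at most $C/H_n$ once $H_n$ is sufficiently large. Both of these estimates are the content of Ros's Theorem 18 in the cited reference.

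Next, choose any point $p_n \in \Sigma_n$ and consider the pointed rescaled manifolds $(M, H_n^2 g, p_n)$. By homogeneous regularity these converge in the pointed smooth Cheeger--Gromov sense to Euclidean $\R^3$. Under this rescaling the immersions $\Sigma_n$ have mean curvature $1$, uniformly bounded second fundamental form, and uniformly bounded intrinsic diameter, while volume preserving stability is scale invariant. Standard compactness for CMC immersions (upgraded to higher regularity via the CMC equation) allows us to extract a subsequence converging smoothly, as pointed two-sided immersions with possibly higher multiplicity, to a closed complete two-sided volume preserving stable CMC immersion $\Sigma_\infty \hookrightarrow \R^3$ of mean curvature $1$. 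The Barbosa--do Carmo classification then forces $\Sigma_\infty$ to be a (possibly multiply covered) round sphere of radius $2$.

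Smooth convergence of the rescaled immersions together with the rotational symmetry of the limit implies that for $n$ large, $\Sigma_n$ coincides, up to a small $C^{2, \alpha}$ perturbation, with a (cover of a) geodesic sphere in $(M, g)$ of radius $2/H_n$, contradicting the assumption. The main obstacle in implementing this plan is the uniform second fundamental form and diameter estimate: for volume preserving, rather than fully, stable immersions the usual Schoen--Simon--Yau approach is not directly available, and one has to rely on Ros's refinement using volume-preserving test functions and the Simons identity. A secondary technical point is that the blow-up limit may be a nontrivial cover of the sphere; as in the remark following Theorem~\ref{thm:uniquestable}, we allow ``perturbation of a geodesic sphere'' to include such covers when $\Sigma_n$ is only immersed.
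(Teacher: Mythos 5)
The paper does not actually prove this lemma: it is stated as a citation (``essentially Theorem~18 of Ros'') with a remark that the thresholds depend only on the injectivity radius and curvature bounds, so there is no internal proof to compare against. Your blow-up argument is a legitimate and standard route, and it is in fact the one the authors implicitly rely on when they invoke ``Lemma~\ref{lem:smallstableCMC} (or rather its proof)'' in the proof of Theorem~\ref{thm:uniquestable}, where the lemma must be applied in a sequence of rescaled ambient manifolds converging to the cone. Note, though, that Ros's own proof in the cited survey proceeds differently --- via Hersch-type test functions giving $\operatorname{area}(\Sigma) H^2 \leq C$ and integral curvature (almost-umbilicity) estimates rather than a compactness/blow-up scheme --- so your proposal is a genuinely different reconstruction, one whose payoff is exactly the locality needed in Theorem~\ref{thm:uniquestable}.

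One point in your write-up is circular as stated: you attribute the pointwise bound $|h| \leq C H$ and the intrinsic diameter bound $C/H$ to ``the content of Ros's Theorem~18,'' but Theorem~18 \emph{is} the statement being proved, so these ingredients need independent justification. Both can be supplied. For the curvature bound, apply the bounded-mean-curvature estimate of Lemma~\ref{largesmallmc} in the dilated manifold $(M, H^2 g)$, whose homogeneous regularity constants only improve under dilation by $H \geq 1$; undoing the scaling gives $|h| \leq \beta H$. For the diameter bound, observe that volume preserving stability forces the Morse index of the full Jacobi operator to be at most one, hence the surface is strongly stable outside a fixed intrinsic ball; since $|h|^2 \geq H^2/2$ pointwise, distance-to-boundary estimates for strongly stable CMC surfaces (of Rosenberg--Souam--Toubiana or Elbert--Nelli--Rosenberg type) then yield compactness of each component with diameter $O(1/H)$. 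A second glossed step is the assertion that ``volume preserving stability is scale invariant'' and passes to the limit: the mean-zero constraint does not transplant verbatim along the converging immersions, and one must correct test functions by small adjustments (as in the compactness arguments of \cite[Proposition 2.2]{stablePMT}); since your diameter bound makes the limit a closed surface, this correction is routine, after which Barbosa--do Carmo applies as you say, with multiple covers handled exactly as in the remark following Theorem~\ref{thm:uniquestable}.
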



\begin{lemma} [Essentially \protect {\cite[Proposition 2.2]{stablePMT}}] \label{largesmallmc} Let $(M, g)$ be a homogeneously regular $3$-dimensional Riemannian manifold. For every $\alpha > 0$ there is $\beta > 0$ such that the second fundamental form of every immersed complete volume preserving stable CMC surface whose mean curvature is bounded in absolute value by $\alpha$ is bounded in norm by $\beta$. 
\end{lemma}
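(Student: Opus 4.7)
The plan is to argue by contradiction via a standard blow-up analysis, reducing to the classification of complete stable minimal surfaces in $\R^3$.

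Suppose there exist $\alpha > 0$ and a sequence of immersed complete volume preserving stable CMC surfaces $\Sigma_i$ in $(M,g)$ with $|H(\Sigma_i)| \leq \alpha$ but $\sup_{\Sigma_i}|A_i| \to \infty$. First I would apply a standard point-picking argument to produce points $p_i \in \Sigma_i$ and scales $\lambda_i := |A_i(p_i)| \to \infty$ such that $|A_i| \leq 2\lambda_i$ on the intrinsic geodesic ball $B_{\Sigma_i}(p_i, \lambda_i)$. Using that $(M,g)$ is homogeneously regular (so of bounded sectional curvature and with injectivity radius bounded below), I would pull back to exponential normal coordinates centered at $p_i$ and rescale the ambient metric by $\lambda_i^2$. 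The rescaled surfaces $\tilde\Sigma_i$ then satisfy $|\tilde A_i(0)| = 1$, $|\tilde A_i| \leq 2$ on intrinsic balls of diverging radius, and $\tilde H_i = H_i/\lambda_i \to 0$, and the rescaled ambient metrics converge smoothly on compact sets to the Euclidean metric on $\R^3$.

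Applying standard interior elliptic estimates to the CMC graph equation (using the uniform $|\tilde A_i|$ bound to represent each $\tilde\Sigma_i$ locally as a graph over its tangent plane at $p_i$), I would extract a subsequential $C^{k,\alpha}_{\text{loc}}$ limit $\Sigma_\infty \subset \R^3$, a complete immersed minimal surface with $|A_\infty(0)| = 1$; in particular, $\Sigma_\infty$ is non-flat. The main step is then to verify that $\Sigma_\infty$ is stable as a minimal surface in $\R^3$. Given a test function $\phi \in C^\infty_c(\Sigma_\infty)$, I would pull back via the local convergence to obtain $\phi_i \in C^\infty_c(\tilde\Sigma_i)$; at the unrescaled scale these are supported in a ball of radius $O(1/\lambda_i)$ around $p_i$ and in general fail to be mean zero, so I cannot directly feed them into the volume preserving stability inequality for $\Sigma_i$. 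I would remedy this by constructing correction functions $\psi_i \in C^\infty_c(\Sigma_i)$ supported far from $p_i$ with $\int \psi_i\, d\mu_g = -\int \phi_i\, d\mu_g$ and with Dirichlet energy $\int |\nabla \psi_i|^2\, d\mu_g \to 0$; such $\psi_i$ can be built from logarithmic cut-off functions on large intrinsic geodesic balls of $\Sigma_i$ located far from $p_i$, exploiting the completeness of $\Sigma_i$ and the bounded geometry of $(M,g)$. Applying the volume preserving stability inequality to $\phi_i + \psi_i$ and passing to the limit (the cross terms between $\phi_i$ and $\psi_i$ vanish in the limit since their supports separate), I obtain the stability inequality $\int_{\Sigma_\infty} |\nabla \phi|^2 \geq \int_{\Sigma_\infty} |A_\infty|^2 \phi^2$.

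With $\Sigma_\infty$ a complete orientable stable minimal immersion into $\R^3$ of dimension $2$, the classical theorem of do Carmo--Peng and Fischer-Colbrie--Schoen forces $\Sigma_\infty$ to be a plane, contradicting $|A_\infty(0)| = 1$. The main obstacle is the construction of the correction functions $\psi_i$: if one had no control whatsoever on the global size of $\Sigma_i$ (e.g., a sequence of closed surfaces with uniformly bounded area), the naive log cut-off construction would break down, and one would have to use volume preserving stability more carefully, namely the fact that it implies Morse index at most one, together with finite-index minimal surface classification in $\R^3$ and a separate argument excluding the catenoid as a blow-up limit by testing volume preserving stability against its negative Jacobi direction.
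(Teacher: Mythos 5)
The paper offers no proof of this lemma beyond the citation to \cite[Proposition 2.2]{stablePMT}, so your proposal must be judged on its own terms. Your blow-up skeleton (point-picking at curvature concentration, rescaling by $|A_i(p_i)|$, convergence of the ambient to $\R^3$ using homogeneous regularity, extraction of a complete non-flat minimal limit with $|A_\infty(0)|=1$) is the standard and correct frame, and is indeed the frame of the cited proof. The gap is in the one step you yourself flag: the transfer of stability to the limit via correction functions $\psi_i$ supported far from $p_i$. This step fails in precisely the cases the lemma is used for in this paper, namely \emph{closed} surfaces, where there need be no room for a log cut-off on ``large intrinsic geodesic balls located far from $p_i$''; and even for non-compact $\Sigma_i$, the log cut-off energy estimate requires area upper bounds on $\Sigma_i$ away from $p_i$, where $|A_i|$ is completely uncontrolled (the point-picking only controls $|A_i|$ near $p_i$), so ``bounded geometry of $(M,g)$'' does not deliver what you need. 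Note also that after rescaling (the stability quadratic form is scale-invariant in dimension two) the deficit $\int \phi_i$ is of order one, not small, so the correction genuinely must carry $O(1)$ integral with $o(1)$ energy --- exactly the kind of function whose existence is in question.

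The efficient repair goes the other way: do not upgrade to strong stability at all. Given $\phi \in C^\infty_c(\Sigma_\infty)$ with $\int \phi = 0$, the pulled-back $\phi_i$ have $\int \phi_i = \epsilon_i \to 0$, and you can restore the mean-zero constraint \emph{locally}, by subtracting $\epsilon_i\, \chi / \int \chi$ for a fixed bump $\chi$ inside the region of convergence; this perturbation vanishes in the limit. The limit $\Sigma_\infty$ is then a complete non-compact minimal surface in $\R^3$ satisfying the stability inequality for all \emph{mean-zero} compactly supported test functions, and da Silveira's theorem (complete non-compact weakly stable CMC surfaces in $\R^3$ are planes) gives the contradiction with $|A_\infty(0)|=1$, with no need for do Carmo--Peng/Fischer-Colbrie--Schoen. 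Your fallback sketch (Morse index at most one plus finite-index classification) is morally viable but incomplete as stated: by L\'opez--Ros the index-one alternatives are the catenoid \emph{and} Enneper's surface --- the latter cannot be dropped since the lemma concerns immersions --- and ``testing volume preserving stability against the negative Jacobi direction'' does not work directly, because the lowest eigenfunction is a ground state, hence of one sign and in particular not mean-zero; one must exhibit genuinely mean-zero destabilizing test functions (or again quote da Silveira, which subsumes both exclusions).
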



\begin{rmk} The thresholds and estimates in Lemmas \ref{lem:smallstableCMC} and \ref{largesmallmc} only depend on the injectivity radius and on curvature bounds for the ambient manifold $(M, g)$.
\end{rmk}


\begin{lemma}[Essentially \protect {\cite[Proposition 2.3]{stablePMT}}]\label{decay-sff}
Let $(M, g)$ be a $3$-dimensional asymptotically conical Riemannian manifold such that the expansion (\ref{expansionmetric}) holds in $C^2$. For every $\alpha > 0$ and $r_0 > 0$ there is $\beta > 0$ such that for every connected complete volume preserving stable CMC immersion $\varphi : \Sigma \to (M, g)$ whose mean curvature is bounded in absolute value by $\alpha$ and whose trace intersects $B_{r_0}$ we have the estimate 
\[
\sup_{x \in \Sigma} |\varphi (x)||h(x)|\leq \beta.
\]
\end{lemma}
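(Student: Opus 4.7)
The plan is to adapt the blow-up argument of Proposition 2.3 in \cite{stablePMT} to the asymptotically conical setting by exploiting the scale invariance of the model cone. I argue by contradiction. Suppose there exist $\alpha, r_0 > 0$ and a sequence of CMC immersions $\varphi_i : \Sigma_i \to (M, g)$ as in the hypothesis, together with points $z_i \in \Sigma_i$ satisfying $|\varphi_i(z_i)||h_i(z_i)| \to \infty$. By Lemma \ref{largesmallmc}, the uniform bound $|H_i|\leq \alpha$ forces $|h_i| \leq c(\alpha)$, so that $|\varphi_i(z_i)| \to \infty$. A standard point-picking argument lets me choose $y_i \in \Sigma_i$ so that $|\varphi_i(y_i)||h_i(y_i)|$ approximately attains $\sup_{\Sigma_i} |\varphi_i||h_i|$; this gives $|h_i(x)| \leq 2|h_i(y_i)|$ for every $x \in \Sigma_i$ with $|\varphi_i(x)| \geq |\varphi_i(y_i)|/2$. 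I then rescale the ambient metric by $\tilde g_i := |h_i(y_i)|^2 g$. In these units $\tilde y_i$ sits at rescaled distance $\mu_i := |\varphi_i(y_i)||h_i(y_i)| \to \infty$ from the tip, $|\tilde h_i(\tilde y_i)| = 1$, $|\tilde h_i| \leq 2$ on $\tilde B_{\mu_i/2}(\tilde y_i)$, and $|\tilde H_i| \leq \sqrt{2}$ follows from the pointwise Cauchy-Schwarz inequality $|H|\leq\sqrt{2}\,|h|$ valid on any surface in a three-manifold. The $C^2$ expansion hypothesis forces the sectional curvatures of $(M, \tilde g_i)$ on $\tilde B_{\mu_i/2}(\tilde y_i)$ to be $O(\mu_i^{-2}) \to 0$, so $(M, \tilde g_i, \tilde y_i)$ converges smoothly on compact sets to flat $(\R^3, 0)$.

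Standard compactness for CMC immersions with uniformly bounded second fundamental form, applied along a diagonal subsequence to the connected component of $y_i$ in the preimage of larger and larger rescaled balls, then yields a smooth subsequential limit $\tilde \varphi_\infty : \Sigma_\infty \to \R^3$ which is a complete, connected, volume preserving stable CMC immersion with $|\tilde h_\infty(0)| = 1$ and $|\tilde h_\infty| \leq 2$. Because $|\tilde h_\infty(0)|=1$ excludes the plane, the classification of complete volume preserving stable CMC surfaces in Euclidean three-space (Fischer-Colbrie--Schoen / do Carmo--Peng in the minimal case, Barbosa--do Carmo together with its extensions to the complete non-compact setting in the non-minimal case) forces $\tilde \varphi_\infty$ to parametrize a compact round sphere.

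Write $D$ for the diameter of the limit sphere and fix any $R > D$. Smooth convergence implies that for $i$ large, the connected component $U_i$ of $y_i$ in $\tilde \varphi_i^{-1}(\tilde B_R(\tilde y_i))$ has image strictly inside $\tilde B_R(\tilde y_i)$, so $U_i$ is simultaneously open and closed in $\Sigma_i$. By connectedness of $\Sigma_i$ then $U_i = \Sigma_i$, whence $\varphi_i(\Sigma_i) \subset B_{R/|h_i(y_i)|}(\varphi_i(y_i))$ in the original metric. For this ball to meet $B_{r_0}$ one needs $|\varphi_i(y_i)| \leq r_0 + R/|h_i(y_i)|$, i.e.\ $\mu_i \leq r_0|h_i(y_i)| + R \leq r_0 c + R$, contradicting $\mu_i \to \infty$. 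I expect the main technical hurdle to lie in the classification step: one must rigorously rule out non-compact complete volume preserving stable CMC examples in $\R^3$ with bounded $|h|$ other than round spheres and planes, and confirm that stability descends from $\Sigma_i$ to $\Sigma_\infty$ via localization of mean-zero test functions under the chosen blow-up.
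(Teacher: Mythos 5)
Your proposal is correct and is essentially the same argument as the paper's, which proves this lemma by deferring to \cite[Proposition 2.3]{stablePMT}: there, too, one picks near-maximum points of $|\varphi||h|$, rescales by $|h(y_i)|$ so that the ambient geometry converges to flat $\R^3$ (the conical structure makes this scale-invariance argument work verbatim), passes to a complete volume preserving stable CMC limit with $|\tilde h_\infty(0)|=1$, classifies it as a round sphere, and contradicts connectedness together with the trace meeting $B_{r_0}$. The one hurdle you flag is already settled in the literature and is what the cited proof uses: for the noncompact case one needs the classification of complete \emph{weakly} (volume preserving) stable CMC surfaces in $\R^3$ as planes, due to da Silveira (with the compact case being Barbosa--do Carmo), rather than Fischer-Colbrie--Schoen/do Carmo--Peng, which require strong stability and so do not cover a volume preserving stable minimal limit.
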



\begin{lemma} [\protect{\cite[(4)]{Christodoulou-Yau:1988}}] \label{lem:CY} Let $(M, g)$ be a $3$-dimensional Riemannian manifold. Then 
\begin{align} \label{CY}
\int_\Sigma H^2 + 2|h|^2 + 2 (R \circ \varphi)  \leq  64 \pi
\end{align}
for every connected closed volume preserving stable CMC immersion $\varphi : \Sigma \to (M, g)$. Here, $R$ is the scalar curvature of $(M, g)$ and $H$ and $h$ are respectively the mean curvature and second fundamental form of the immersion. The bound on the right hand side may be lowered to $48 \pi$ when the genus of $\Sigma$ is zero.
\end{lemma}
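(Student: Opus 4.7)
The plan is to apply the volume preserving stability inequality to three carefully balanced conformal test functions and then manipulate the resulting inequality with the Gauss equation and Gauss--Bonnet. Recall that volume preserving stability of the immersion $\varphi : \Sigma \to (M,g)$ gives
\[
\int_\Sigma |\nabla f|^2 \geq \int_\Sigma (|h|^2 + \Ric(\nu,\nu)) f^2
\]
for every $f \in C^\infty(\Sigma)$ with $\int_\Sigma f = 0$.

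First I would construct the test functions via Hersch's trick. When $\Sigma$ has genus zero, uniformization provides a conformal diffeomorphism $\Phi_0 : \Sigma \to S^2$, and a topological fixed point argument produces a M\"obius transformation $\tau$ of $S^2$ such that the ambient coordinate functions on $S^2 \subset \R^3$ pulled back via $\Phi := \tau \circ \Phi_0$ have zero mean on $\Sigma$. Setting $f_i = x_i \circ \Phi$ for $i = 1,2,3$ gives $\sum_i f_i^2 \equiv 1$ and, by conformal invariance of Dirichlet energy in two dimensions,
\[
\sum_{i=1}^3 \int_\Sigma |\nabla f_i|^2 = 2 E(\Phi) = 8\pi \deg(\Phi) = 8\pi.
\]
For a surface of genus $\gamma \geq 1$ one replaces $\Phi_0$ with a conformal branched cover of $S^2$ of degree $d \leq \lfloor (\gamma+3)/2\rfloor$ supplied by Li--Yau via Brill--Noether theory, yielding $\sum_i \int_\Sigma |\nabla f_i|^2 \leq 8\pi d$.

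Plugging $f_1, f_2, f_3$ into the stability inequality and summing, the right hand side becomes $\int_\Sigma (|h|^2 + \Ric(\nu,\nu))$. I would rewrite this integrand using the contracted Gauss equation in dimension three,
\[
2K_\Sigma = R - 2\Ric(\nu,\nu) + H^2 - |h|^2,
\]
which yields $2(|h|^2 + \Ric(\nu,\nu)) = H^2 + |h|^2 + R - 2K_\Sigma$. Integrating and applying Gauss--Bonnet $\int_\Sigma K_\Sigma = 2\pi \chi(\Sigma) = 2\pi(2-2\gamma)$ gives
\[
16\pi d \geq \int_\Sigma (H^2 + |h|^2 + R) - 4\pi(2 - 2\gamma).
\]
A short case check using the Li--Yau degree bound shows that $16\pi d + 4\pi(2 - 2\gamma) \leq 32\pi$ for every $\gamma \geq 0$, with the stronger bound $24\pi$ when $\gamma = 0$. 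Since $H^2 \geq 0$,
\[
\int_\Sigma (H^2 + 2|h|^2 + 2R) \leq 2\int_\Sigma (H^2 + |h|^2 + R),
\]
and the claimed bounds of $64\pi$ in general and $48\pi$ for genus zero follow.

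The only nontrivial input is the Hersch--Li--Yau construction of the balanced conformal test functions; the remainder of the proof is algebraic rearrangement using the Gauss equation and Gauss--Bonnet. Notably, no sign assumption on $R$ or the ambient curvature is needed, which is precisely what makes the estimate applicable in the very general setting of Lemma \ref{lem:CY}.
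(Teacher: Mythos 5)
Your proof is correct and is essentially the argument behind the cited result: the paper gives no proof of this lemma, deferring to Christodoulou--Yau, whose reasoning is exactly your combination of Hersch-balanced conformal test functions in the volume preserving stability inequality with the traced Gauss equation and Gauss--Bonnet, and the Li--Yau degree bound $d \leq \lfloor (\gamma+3)/2 \rfloor$ supplies the higher-genus constant $64\pi$ in the standard way. All steps check out, including the balancing for branched conformal covers, the case check $2d \leq \gamma+3$, and the final doubling step using $H^2 \geq 0$.
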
 


\begin{lemma} [Essentially  \protect{\cite[(5.6), (5.7)]{Huisken-Yau:1996}}] \label{HawkingHY} Let $(M, g)$ be a $3$-dimensional asymptotically conical Riemannian manifold such that the expansion (\ref{expansionmetric}) holds in $C^1$ and such that 
\[
R \geq - O (r^{- 2 - \epsilon})
\]
for some $\epsilon > 0$ where $R$ is the scalar curvature of $(M, g)$. There exists a constant $\gamma > 0$ such that
\[
\int_\Sigma |h|^2 \leq  \gamma
\]
for every connected closed volume preserving stable CMC surface immersion $\varphi : \Sigma \to (M, g)$ such that $\underline{r}(\varphi (\Sigma)) > 1$ is sufficiently large. 

\begin{rmk} The assumption on the scalar curvature is satisfied when $K_L > 1$ and \eqref{expansionmetric} holds in $C^{2}$. Recall that $K_L$ is the Gaussian curvature of the link. It is also satisfied when $K_L \geq 1$ and when
\[
g = g_C + o(r^{-\epsilon})
\] 
in $C^2$ as $r \to \infty$. 
\end{rmk}

\begin{rmk} 
The arguments in \cite{stablePMT} show that  
\[
\overline {r} (\Sigma) \to \infty \qquad \text{ as } \qquad \text{area} (\Sigma) \to \infty
\]
when the scalar curvature of $(M, g)$ is positive. 
\end{rmk}

\begin{proof} 
The vector field 
\[
r \partial_r \in \mathfrak{X}(C)
\]
in $(C, g_C)$ shares all the properties of the position vector field of Euclidean space that are needed in the derivation of (5.6) in \cite{Huisken-Yau:1996}. The idea is then to combine (\ref{CY}) with the estimate 
\[
\underline{r} (\Sigma)^\epsilon \int_\Sigma r^{-2 - \epsilon} \lesssim \int_\Sigma H^2
\] 
valid provided $\underline{r}(\Sigma) > 1$ is sufficiently large. 
\end{proof}
\end{lemma}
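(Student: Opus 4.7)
The plan is to combine the Christodoulou-Yau inequality \eqref{CY} with a weighted $L^2$ estimate
\[
\underline{r}(\Sigma)^\epsilon \int_\Sigma r^{-2-\epsilon} \lesssim \int_\Sigma H^2
\]
valid whenever $\underline{r}(\Sigma)$ is sufficiently large. Granted this estimate, the lemma will follow by a short algebraic manipulation: the hypothesis $R \geq -O(r^{-2-\epsilon})$ gives $\int_\Sigma (-R)_+ \lesssim \int_\Sigma r^{-2-\epsilon}$, so Lemma~\ref{lem:CY} reads
\[
\int_\Sigma H^2 + 2\int_\Sigma |h|^2 \leq 64\pi + C\,\underline{r}(\Sigma)^{-\epsilon}\int_\Sigma H^2,
\]
and once $\underline{r}(\Sigma)$ is so large that $C\,\underline{r}(\Sigma)^{-\epsilon}<\tfrac{1}{2}$ the last term is absorbed into the left hand side, producing uniform bounds on $\int_\Sigma H^2$ and hence on $\int_\Sigma |h|^2$.

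To establish the weighted estimate I will use, as suggested by the remark after the statement, that on the exact cone $(C,g_C)$ the vector field $r\partial_r$ is the dilation generator with $\nabla(r\partial_r)=g_C$, so it plays the role of the Euclidean position vector in the argument of (5.6)--(5.7) of \cite{Huisken-Yau:1996}. Concretely, in dimension three the radial test functions $\log r$ and $r^{-\epsilon}$ admit explicit Laplacians and Hessians on $(C,g_C)$, and the identity $\int_\Sigma \Delta_\Sigma f = 0$ on the closed surface $\Sigma$, expanded via $\Delta_\Sigma f = \Delta^M f - H\,\partial_\nu f - (\nabla^2 f)(\nu,\nu)$, produces two divergence identities. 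Taking $f=\log r$ and applying Cauchy-Schwarz yields the radial bound
\[
\int_\Sigma \frac{\nu_r^2}{r^2} \leq \tfrac14 \int_\Sigma H^2,
\]
where $\nu_r=\langle\nu,\partial_r\rangle$. Taking $f=r^{-\epsilon}$ produces a second identity controlling the tangential contribution $\int_\Sigma r^{-2-\epsilon}|\nu^\perp|^2$ in terms of the radial contribution $\int_\Sigma r^{-2-\epsilon}\nu_r^2$ and a CMC correction of the form $H\int_\Sigma r^{-1-\epsilon}\nu_r$. Combining the two identities, using $\int_\Sigma r^{-\epsilon}\leq\underline{r}(\Sigma)^{-\epsilon}\area(\Sigma)$ and $H^2\area(\Sigma)=\int_\Sigma H^2$ in the final Cauchy-Schwarz step, yields the weighted estimate.

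The main technical difficulty will be transferring these exact cone identities to the asymptotically conical manifold $(M,g)$. The formulas for $\Delta^M f$ and $\nabla^2 f$ acquire error terms of order $|g-g_C|_{C^1}=o(1)$ on $M\setminus B_{\underline{r}(\Sigma)}$, contributing extra pieces of the form $o(1)\int_\Sigma r^{-2-\epsilon}$ and $o(1)|H|\int_\Sigma r^{-1-\epsilon}$; these are absorbed into the main terms once $\underline{r}(\Sigma)$ is sufficiently large, which is both the reason the estimate is only asserted in this regime and where the $C^1$ hypothesis on the conical expansion enters in an essential way.
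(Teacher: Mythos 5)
Your overall architecture is the same as the paper's: combine the Christodoulou--Yau bound \eqref{CY} with the weighted estimate $\underline{r}(\Sigma)^\epsilon \int_\Sigma r^{-2-\epsilon} \lesssim \int_\Sigma H^2$ and absorb, using $r\partial_r$ as a surrogate position vector \`a la Huisken--Yau (5.6)--(5.7); your exact-cone computations are also correct (both identities check out against the slices $\{r_0\}\times L$, where $\nu_r = 1$ and $H = 2/r_0$). The gap is in the transfer step, and it is not cosmetic. For the test function $f = \log r$ the ambient error is pointwise $o(r^{-2})$ --- the Hessian error is $|\Gamma_g - \Gamma_{g_C}|\,|d\log r| = o(r^{-1})\cdot r^{-1}$, with no extra decay available under the paper's hypothesis $r|\nabla(g-g_C)| = o(1)$ --- so the radial identity becomes
\begin{equation*}
2\int_\Sigma \frac{\nu_r^2}{r^2} = H\int_\Sigma \frac{\nu_r}{r} + o(1)\int_\Sigma r^{-2},
\end{equation*}
and the error carries \emph{no} $\epsilon$-gain, contrary to your claim that all errors are of the form $o(1)\int_\Sigma r^{-2-\epsilon}$. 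The term $\int_\Sigma r^{-2}$ is not a controlled quantity: it is not bounded by $\int_\Sigma H^2$ even on the exact cone (a large nearly-round sphere of radius $\rho$ at distance $k \ll \rho$ from the tip has $\int_\Sigma r^{-2} \sim \log(\rho/k)$ while $\int_\Sigma H^2 \approx 16\pi$, and $\underline{r} = k$ can be arbitrarily large). Attempts to repair this by H\"older, or by running the $f = r^{-\delta}$ identity at a second exponent, reintroduce uncontrolled powers of $\overline{r}(\Sigma)/\underline{r}(\Sigma)$; note also that the $r^{-\epsilon}$ identity alone is vacuous for your purpose, since it is a single relation between $\int r^{-2-\epsilon}$ and $\int r^{-2-\epsilon}\nu_r^2$ that cannot bound both. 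This is precisely where Huisken--Yau's asymptotically Schwarzschild decay (errors smaller by a full relative power of $r^{-1}$) is used in their (5.6)--(5.7), so the computation does not transplant verbatim to the $o(1)$-in-$C^1$ conical setting.

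A repair needs an input that controls $\Sigma$ uniformly at \emph{all} scales between $\underline{r}(\Sigma)$ and $\overline{r}(\Sigma)$, not just the two endpoint scales. One robust route: the first variation with $r\partial_r$ (where $\operatorname{div}_\Sigma(r\partial_r) = 2 + o(1)$) gives $H \geq (2 - o(1))/\overline{r}(\Sigma)$, hence, since $H$ is constant, $\area(\Sigma)/\overline{r}(\Sigma)^2 \lesssim \int_\Sigma H^2$; Simon-type monotonicity (stable under $C^1$-small perturbations of $g_C$ on the region containing $\Sigma$) then propagates this to the density bound $\cH^2_g(\Sigma \cap B_t) \lesssim t^2 \int_\Sigma H^2$ for all $t \geq \underline{r}(\Sigma)$, and a layer-cake integration yields
\begin{equation*}
\int_\Sigma r^{-2-\epsilon} \lesssim \epsilon^{-1}\,\underline{r}(\Sigma)^{-\epsilon}\int_\Sigma H^2,
\end{equation*}
which is the key estimate; your absorption into \eqref{CY} then proceeds exactly as you wrote. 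So your first and last paragraphs are fine and match the paper's intent; the middle paragraph's derivation of the weighted estimate must be replaced or supplemented by a scale-uniform density argument of this kind.
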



\begin{lemma} [Essentially \cite{Gulliver-Lawson} and \cite{Fischer-Colbrie-Schoen:1980}] \label{lem:GulliverLawson} Let $(L, g_L)$ be a Riemannian surface with 
\[
K_L \geq 1.
\]
Let $(C, g_C)$ be the $3$-dimensional Riemannian cone on $(L, g_L)$. Thus
\[
C = (0, \infty) \times L \qquad \text{ and } \qquad g_C = dr^2 + r^2 g_L.
\]
Consider a stable minimal immersion of a two-dimensional manifold $S$ into $(C, g_C)$ that is complete away from the tip of the cone. Then $S$ diffeomorphic to either the cylinder or the plane. Moreover, the immersion is totally geodesic and 
\[
\Ric (\nu, \nu) = 0
\]
where $\Ric$ is the Ricci tensor of $(C, g_C)$ and $\nu$ is a unit normal field along the immersion. 
\begin{proof}
The condition on the Gaussian curvature of the link $(L, g_L)$ implies that 
\[
\Ric \geq 0.
\] 
Following an idea of R. Gulliver and B. Lawson \cite{Gulliver-Lawson}, we consider the conformally related cylindrical metric
\[
\tilde g_C = r^{-2} g_C = (d \log r)^2 + g_C.
\]
The scalar curvature of $\tilde g_C$ is positive. Moreover, the immersion is complete with respect to this metric. A computation using the stability of the immersion with respect to $g_C$, the Gauss equation, and the identity 
\[
\nabla (r \partial_r) = \text{id}
\]
in $(C, g_C)$ leads to the estimate
\[
- \tilde \Delta - \tilde K \geq 0
\]
where $\tilde \Delta$ and $\tilde K$ are the (non-positive) Laplace-Beltrami operator and the Gaussian curvature of the immersion with respect to $\tilde g_C$. The results of D. Fischer-Colbrie and R. Schoen \cite[Theorem 3]{Fischer-Colbrie-Schoen:1980} imply that $S$ is conformally equivalent to either the cylinder or the plane. Either way, as in \cite{Fischer-Colbrie-Schoen:1980}, we may find a sequence $f_{j} \in C_c^\infty(S)$ that converges to $1$ pointwise as $j \to \infty$ while their Dirichlet energies tend to $0$. Using these test functions in the stability inequality for the original immersion and passing to the limit, we obtain that 
\begin{equation*}
\int_{S} |h|^{2} + \Ric(\nu,\nu) \leq 0
\end{equation*}
from which the claim follows.
\end{proof}
\end{lemma}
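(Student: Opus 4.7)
My plan is to adapt the conformal rescaling trick of Gulliver--Lawson, turning the cone into a complete cylinder with positive scalar curvature, and then invoke the conformal type classification of Fischer--Colbrie--Schoen.

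The first observation is that the hypothesis $K_L \geq 1$ gives $\Ric_L \geq g_L$, from which a direct cone computation shows $\Ric_{g_C} \geq 0$; in particular the scalar curvature $R_C \geq 0$. I would then introduce the conformal metric $\tilde g_C = r^{-2} g_C$, which in the coordinate $t = \log r$ is precisely the product cylinder $dt^2 + g_L$ on $\mathbb{R}\times L$. The scalar curvature of $\tilde g_C$ equals the scalar curvature of $(L,g_L) \geq 2 > 0$, and more importantly the immersion $S$, which is complete away from the tip of $(C,g_C)$, becomes complete in $\tilde g_C$ (since the tip is pushed to $t=-\infty$).

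The crux is to transform the stability of the minimal immersion in $(C,g_C)$ into an operator inequality $-\tilde\Delta - \tilde K \geq 0$ on $S$ (equipped with the induced metric from $\tilde g_C$), where $\tilde\Delta$ and $\tilde K$ are the intrinsic Laplacian and Gauss curvature. Combining the ambient stability $\int_S |\nabla f|^2 \geq \int_S (|h|^2 + \Ric(\nu,\nu))f^2$ with the Gauss equation (which for a minimal surface in a $3$-manifold gives $|h|^2 + \Ric(\nu,\nu) = \tfrac{1}{2}|h|^2 + \tfrac{1}{2}R_C - K_S$) and the fact that $R_C\geq 0$, one gets an inequality on $S$ in the $g_C$-induced metric; conformal invariance of Dirichlet energy in two dimensions plus the identity $\nabla(r\partial_r) = \text{id}$ on $(C,g_C)$ (which controls how the conformal factor $\phi = -\log r$ interacts with $S$) then packages this into the cylindrical-metric inequality $-\tilde\Delta - \tilde K \geq 0$. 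Once this is in hand, Theorem 3 of Fischer--Colbrie--Schoen forces $S$ to be conformally equivalent to either $\mathbb{R}^2$ or the cylinder $\mathbb{R}\times S^1$.

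Finally, on either conformal type one can construct standard logarithmic cutoff functions $f_j \in C_c^\infty(S)$ with $f_j \to 1$ pointwise and $\int_S |\nabla f_j|^2 \to 0$. Plugging these into the \emph{original} stability inequality (in the $g_C$ metric, which is the geometrically meaningful one for the totally geodesic conclusion) and using that $\Ric_{g_C}(\nu,\nu) \geq 0$ together with $|h|^2 \geq 0$, the dominated convergence/Fatou argument yields $\int_S (|h|^2 + \Ric(\nu,\nu)) \leq 0$, hence $|h|\equiv 0$ and $\Ric(\nu,\nu)\equiv 0$. I expect the main technical hurdle to be the computation producing $-\tilde\Delta - \tilde K \geq 0$: one must carefully track the conformal factor $-\log r$ restricted to $S$, and use the key identity $\nabla(r\partial_r) = \text{id}$ to absorb the cross terms involving $\nabla\phi$ back into non-negative quantities. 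Everything else is assembly.
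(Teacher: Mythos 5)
Your proposal is correct and follows essentially the same route as the paper's own proof: the Gulliver--Lawson conformal change to the cylindrical metric $\tilde g_C = r^{-2}g_C$, the derivation of $-\tilde\Delta - \tilde K \geq 0$ from stability via the Gauss equation and $\nabla(r\partial_r) = \mathrm{id}$, the Fischer--Colbrie--Schoen classification of the conformal type, and finally logarithmic cutoffs in the original stability inequality to force $\int_S |h|^2 + \Ric(\nu,\nu) \leq 0$. Your added detail on the Gauss-equation rewriting and the conformal invariance of the Dirichlet energy is consistent with, and slightly more explicit than, the computation the paper leaves implicit.
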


The following result is an immediate consequence of the characterization of Ricci curvature bounds via optimal transport, cf. \cite{LottVillani,Sturm1,Sturm2}. We include an elementary and essentially well-known proof for convenient reference. We are grateful to Professors Anton Petrunin and Guofang Wei for valuable discussions related to this result. 

\begin{lemma} \label{lem:Riccilimit} Let $g, g_1, g_2, \ldots$ be Riemannian metrics on a manifold $M$ such that $g_i \to g$ in $C^0$. If each $g_i$ has non-negative Ricci curvature, then so does $g$. 
\begin{proof} Fix $p \in M$. By the Laplacian comparison theorem, 
\[
\Delta_{g_i} \, \text{dist}_{g_i} (p, \cdot) \leq \frac{m-1}{\text{dist}_{g_i} (p, \cdot)}
\]
holds weakly on $M \setminus \{p\}$ where $m$ is the dimension of $M$. These inequalities pass to the limit 
\begin{align} \label{auxupperbound}
\Delta_g \, \text{dist}_{g} (p, \cdot) \leq \frac{m-1}{\text{dist}_{g} (p, \cdot)}
\end{align}
as $i \to \infty$. Conversely, recall that $\Delta_g \, \text{dist}_{g} (p, \cdot)$ is the mean curvature of the geodesic sphere $S_r(p)$ for $r > 0$ sufficiently small. The expansion of this mean curvature at $q = \exp r \, \theta \in S_r(p)$ where $\theta$ is a unit tangent vector at $p$ is given by
\[
\frac{m-1}{r} - \frac{r}{3} \Ric (\theta, \theta) + O (r^2)
\]
as $r \searrow 0$, cf. \cite[p. 211]{Schoen-Yau:1994}. It follows that $\Ric \geq 0$. 
\end{proof}
\end{lemma}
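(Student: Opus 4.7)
The main difficulty is that $\Ric$ involves two derivatives of the metric, yet we are only given $C^0$ convergence, so we cannot hope to pass to the limit in coordinate formulas for curvature. The strategy is to replace $\Ric \geq 0$ by an equivalent condition that depends only on the distance structure and is therefore stable under $C^0$ convergence. The natural candidate is the Laplacian comparison inequality: for a Riemannian $m$-manifold $(N,h)$, non-negativity of Ricci curvature is equivalent to
\[
\Delta_h d_h(p,\cdot) \leq \frac{m-1}{d_h(p,\cdot)}
\]
holding weakly on $N \setminus \{p\}$ for every $p$. This is the \emph{integrated} form of the statement I want, which is what makes the argument tractable.

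The first step is to write the weak Laplacian comparison inequality for each $g_i$: for any $p \in M$ and any non-negative $\varphi \in C^\infty_c(M \setminus \{p\})$,
\[
-\int_M \langle \nabla^{g_i} d_{g_i}(p,\cdot), \nabla^{g_i} \varphi \rangle_{g_i} \, dV_{g_i} \leq \int_M \frac{m-1}{d_{g_i}(p,\cdot)} \, \varphi \, dV_{g_i}.
\]
I would then pass this inequality to the limit. Since $g_i \to g$ in $C^0$, the associated inner products and volume forms converge uniformly on compact sets, and the distance functions $d_{g_i}(p,\cdot)$ converge uniformly on compact sets to $d_g(p,\cdot)$ by a sandwich argument using the $C^0$ equivalence of the metrics. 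The functions $d_{g_i}(p,\cdot)$ are $1$-Lipschitz with $|\nabla^{g_i} d_{g_i}(p,\cdot)|_{g_i} = 1$ almost everywhere, so their gradients admit a uniform $L^\infty$ bound and converge weakly along a subsequence to the gradient of $d_g(p,\cdot)$. Combining weak convergence of the gradients with uniform convergence of the metric tensors and dominated convergence on the right-hand side yields the limiting inequality
\[
-\int_M \langle \nabla^g d_g(p,\cdot), \nabla^g \varphi \rangle_g \, dV_g \leq \int_M \frac{m-1}{d_g(p,\cdot)} \, \varphi \, dV_g.
\]

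The second step is to recover the pointwise statement $\Ric_g \geq 0$ at $p$. For $r$ smaller than the injectivity radius, $\Delta_g d_g(p,\cdot)$ equals the mean curvature of the geodesic sphere $S_r(p)$, and at $q = \exp_p(r\theta)$ with $|\theta|_g = 1$ this mean curvature expands as
\[
\frac{m-1}{r} - \frac{r}{3} \Ric_g(\theta,\theta) + O(r^2) \qquad \text{as } r \searrow 0.
\]
Comparing with the weak upper bound (which is smooth near $q$) and letting $r \searrow 0$ forces $\Ric_g(\theta,\theta) \geq 0$ for every unit vector $\theta$ at $p$, and $p$ was arbitrary.

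The main obstacle is the passage to the limit in the weak Laplacian comparison, since neither the Christoffel symbols nor the gradient of the distance function have any classical convergence under $C^0$ convergence of the metric. The point that makes this work is that every quantity in the weak inequality is either a Lipschitz distance function or a component of the metric itself, and all such quantities are stable under $C^0$ convergence in the integrated sense that the weak formulation requires.
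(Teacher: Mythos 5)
Your proposal is correct and follows essentially the same route as the paper: both use the weak Laplacian comparison inequality for each $g_i$, pass it to the limit using the $C^0$-stability of distance functions, volume forms, and (weak-$*$) gradients, and then recover $\Ric_g \geq 0$ pointwise from the expansion of the mean curvature of small geodesic spheres. The only difference is that you spell out the weak formulation and the limiting argument in more detail than the paper, which simply asserts that the inequalities pass to the limit.
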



\section{Proofs }

\begin{proof}[Proof of Theorem \ref{existencefoliation}] We consider the continuously differentiable map that takes a (small) function $u \in C^{2, \alpha}(L)$ to the mean curvature 
\[
H(g, u) \in C^{0, \alpha}(L)
\]
of the graph 
\[
\{ (1 + u(x), x) : x \in L \} \subset (0, \infty) \times L
\]
with respect to a $C^{2, \alpha}$ metric $g$ on $(0, \infty) \times L$. The (partial) linearization of this operator at the metric $g = dr^2 + g_L$ and at $u = 0$ with respect to $u$ is the linear map $C^{2, \alpha} (L) \to C^{0, \alpha} (L)$ given by 
\[
v \mapsto - \Delta_L v - (m-1) v.
\]
This operator is invertible. In fact, its lowest eigenvalue is $- (m-1)$ with eigenspace given by the constant functions. The next eigenvalue is 
\[
\lambda_1 (- \Delta_L) - (m -1) > 0.
\]
By the implicit function theorem, there is $\sigma > 0$ so that for all $g$  close to $g_C$ and constant functions $H$ close to $(m-1)$, the prescribed mean curvature equation 
\begin{align} \label{aux:pmc}
H (g, u) = H
\end{align}
has a unique solution $u \in C^{2, \alpha} (L)$ with $||u||_{2, \alpha} < \sigma$. Let $H_1, H_2$ be two constant functions with $H_2 < H_1$ that are close to $(m-1)$ and let $u_1, u_2$ be the corresponding solutions of (\ref{aux:pmc}). Standard analysis of the linear equation satisfied by their difference $u_2 - u_1$ gives that 
\[
u_1 < u_2.
\]
Conversely, by the maximum principle and standard estimates (e.g., \cite[Corollary 16.7]{Gilbarg-Trudinger:1998}), there is $\delta > 0$ such that every solution $u \in C^{2, \alpha} (L)$ of 
\[
H (g, u) = \text{constant}
\]
with $|u| < \delta$ and $g$ sufficiently close to $g_C$ satisfies 
\[
||u||_{2, \alpha} < \sigma.
\]
The theorem follows from this and a scaling argument.
\end{proof}


\begin{proof}[Proof of Theorem \ref{thm:uniquenessiso}] Let $V_i \to \infty$. Using Lemma \ref{lem:minimizingsequences} we find $\rho_i > 0$ and $\Omega_i \subset M$ isoperimetric such that 
\[
\omega_{m-1} \rho_i^m / m  + \mathcal{L}^m_g (\Omega_i) = V_i
\]
and 
\[
\omega_{m-1} \rho_i^{m-1} + \mathcal{H}^{m-1}_g (\partial^* \Omega_i) = A (V_i).
\]
We may think of $\Omega_i \setminus B_2$ as a region in $(1, \infty) \times L \subset C$. We take the union of $\Omega_i$ with $B_2$ and a far out geodesic ball of volume $\omega_{m-1} \rho_i^m / m$ to account for the stray volume. Upon scaling down homothetically by a factor close to 
\[
r_i = ( (m V_i)/\text{area} (L, g_L) )^{1/m}
\]
we obtain a minimizing sequence $\tilde \Omega_i$ in $(C, g_C)$ for the isoperimetric problem of volume 
\[
\text{area}(L, g_L) / m.
\] 
By Lemma \ref{ISOincone}, the unique solution of this problem is given by $(0, 1) \times L$. It follows that the stray volumes $\omega_{m-1} \rho_i^m / m$ are negligible when compared to $V_i$ as $i \to \infty$, i.e. that 
\begin{align} \label{aux:scales}
\rho_i \ll r_i.
\end{align}
Standard compactness arguments for isoperimetric boundaries show that the reduced boundaries of the regions $\tilde \Omega_i$ converge to the cross-section 
\[
\{1\} \times L
\]
locally as measures in $C$. Allard's theorem shows that this convergence is in fact in $C^{1, \alpha}$. A hole filling argument as in \cite[p.\ 175]{isostructure} gives that $\tilde \Omega_i$ has no other boundary than that near $\{1\} \times L$ for $i$ sufficiently large. These results can now be lifted to $\Omega_i$. Finally, if $\rho_i > 0$ for large $i$, a first variation type argument shows that the mean curvature of $\partial \Omega_i \sim (m-1) / r_i$ is equal to $(m-1) / \rho_i$. This contradicts (\ref{aux:scales}). 
\end{proof}

\begin{proof}[Proof of Theorem \ref{thm:uniquestable}]
Let $\Sigma_k$ be a connected closed volume preserving stable CMC surface in $(M, g)$ with $\underline{r} (\Sigma_k) \geq k$. We homothetically rescale 
\[
\Sigma_k \setminus \bar B_2 \qquad \text{ to } \qquad S_k
\]
and 
\[
(M \setminus \bar B_2 \cong (2, \infty) \times L, g) \qquad \text{to} \qquad (M_k \cong (2/\overline{r} (\Sigma_k), \infty) \times L, g_k)
\]
so that 
\[
\overline{r} (S_k) = 1
\]
in $(M_k, g_k)$.  Note that 
\[
(M_k, g_k) \to (C, g_C)
\] 
locally in $C^{2, \alpha}$. The maximum principle shows that 
\[
H_k \geq 2 - o(1)
\]
as $k \to \infty$ where $H_k$ is the mean curvature of $S_k$ in $(M_k, g_k)$. The estimate
\begin{align} \label{auxhestimate}
\int_{S_k} |h_k|^2 \leq \gamma
\end{align}
follows from scaling and Lemma \ref{HawkingHY}. Using also the lower bound on the mean curvature, we see that the area of $S_k$ is a priori bounded. There is $\beta > 0$ depending only on $(C, g_C)$ such that if $H_k > \beta$ and $k$ is sufficiently large, then $S_k$ is close to a geodesic coordinate sphere in $(C, g_C)$ by Lemma \ref{lem:smallstableCMC} (or rather its proof). Assume that $H_k \leq \beta$ for all large $k$. By Lemma \ref{decay-sff}, we have bounds
\[
 \sup_{k \geq 1} \sup_{x \in S_k} |x| |h_k (x)| < \infty
\] 
for the second fundamental form. Using the geometric version of the Arzel\`a-Ascoli theorem, we may extract a subsequential limit immersion $S$ of $S_k$ in $(C, g_C)$ that satisfies the conditions of Lemma \ref{CMCincone}. The trace of this immersion is equal to $\{1\} \times L$. It follows that $\Sigma_k$ is close in scale to a centered sphere in $(M, g)$ for all $k$ sufficiently large. The assertion follows from the uniqueness part of Theorem \ref{existencefoliation}.
\end{proof}


\begin{proof}[Proof of Theorem \ref{thm:min-surf}] We argue by contradiction. Let $\Sigma$ be as in the statement of the theorem. We may assume it is connected. As in the proof of Theorem \ref{thm:uniquestable} above, we homothetically rescale with respect to points whose images in $M$ diverge and then pass to a non-trivial stable minimal limiting immersion $\varphi : S \to (C, g_C)$ that is complete away from the tip of the cone. Lemma \ref{lem:GulliverLawson} implies that the Ricci tensor of $(C, g_C)$ vanishes in the normal direction along this immersion. Using this, our assumption that $K_L > 1$, and the discussion in Appendix \ref{sec:geometryofcones}, we see $\nu$ is radial. Hence the trace of the limiting immersion must be a slice in the cone. This contradicts the minimality of the limiting immersion.
\end{proof}

\begin{proof} [Proof of Theorem \ref{thm:coneangleestimate}] By Lemma \ref{lem:Riccilimit}, the asymptotic cone $(C, g_{C})$ of $(M, g)$ has non-negative Ricci curvature. This is equivalent to the estimate $\Ric_{L} \geq (m-2)g_{L}$ for the link $(L, g_L)$ of $(C, g_C)$. By Bishop's theorem, $\area{(L, g_L)} \leq \omega_{m-1}$ with equality only for the unit sphere. The characterization of isoperimetric regions of $(C, g_C)$ in Lemma \ref{ISOincone} gives that
\[
c_{iso}(C,g_C) =  \area(L,g_L) / \omega_{m-1}.
\]
In fact, the infimum in \eqref{def:coneangle} is achieved by slabs $(0, r) \times L$ for every $r > 0$. Together with a comparison argument for slabs of large volume, we obtain that $c_{iso} (M, g) \leq c_{iso}(C, g_C)$. Assume now that $c_{iso} (M, g) = 1$. It follows that $(L, g_L)$ is the unit sphere so that $(M, g)$ is asymptotically flat. A standard application of Bishop's theorem gives that $(M, g)$ is isometric to Euclidean space.
\end{proof}


\appendix 


\section{Geometry of Cones} \label{sec:geometryofcones}

Let $(L, g_L)$ be an $m-1$ dimensional Riemannian manifold. We consider the Riemannian cone $(C, g_C)$ where 
\[
C = (0, \infty) \times L \qquad \text{ and } \qquad g_C = dr^2 + r^2 g_C.
\]
For every $r > 0$, the slice 
\[
\{r\} \times L
\]
is umbilical with constant mean curvature $(m-1)/r$.  We have that 
\[
\nabla_Y (r \partial_r) = Y 
\]
for all $Y \in \mathfrak{X}(C)$ where $\nabla$ is the Levi-Civita connection of $(C, g_C)$. We have that 
\begin{align*}
\Ric_C (\partial_r, \partial_r) &= 0 \\
\Ric_C(\partial_r, X) &= 0 \\
\Ric_C (X, X) &= (\Ric_L - (m-2) g_L )(X, X) 
\end{align*}
for all $X \in \mathfrak{X}(C)$ that are tangent to $L$. The condition
\begin{align} \label{auxapp}
\Ric_L \geq (m-2) g_L
\end{align}
on the link implies that $(C, g_C)$ has non-negative Ricci curvature, and conversely. At points where \eqref{auxapp} is strict, the radial direction is the unique eigendirection of the Ricci tensor of $(C, g_C)$ with vanishing eigenvalue.


\section{A remark on condition \eqref{Riccicondition}}  \label{app:alternativecondition}

In this section, we observe that Theorem \ref{thm:uniquenessiso} holds in a much broader class of cones than just those whose link $(L, g_L)$ satisfies \eqref{Riccicondition} and \eqref{areacondition}. To this end, we define the isoperimetric profile of a closed $m-1$ dimensional Riemannian manifold $(L, g_L)$ as follows. Given $\beta \in (0,1)$, we let
\begin{equation*}
\cI_{(L,g_{L})}(\beta) = \inf\left\{\frac{\cH^{m-2}_{g_{L}}(\partial^{*}\Omega)}{\cL^{m-1}_{g_{L}}(L)} : \text{$\Omega$ open with finite perimeter and } \cL^{m-1}_{g_{L}}(\Omega) = \beta \cL^{m-1}_{g_{L}}(L)  \right\}.
\end{equation*}
Note that $\cI_{(L,g_{L})}$ is symmetric with respect to $1/2$. It is convenient to set $\cI_{(L,g_{L})}(0) = \cI_{(L,g_{L})}(1) = 0$. 

\begin{lemma}[Levy--Gromov {\cite[p. 520]{Gromov:1999}}] Let $(L, g_L)$ be a closed Riemannian manifold of dimension $m-1$.
If 
\[
\Ric_{L} \geq (m-2)g_{L}
\]
and $(L,g_{L})$ is not the unit sphere, then
\[
\cI_{(L,g_{L})}(\beta) > \cI_{(\mathbb{S}^{m-1},g_{\mathbb{S}^{m-1}})}(\beta)
\]
for all $\beta \in (0,1)$.
\end{lemma}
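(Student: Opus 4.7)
The plan is to deduce the strict inequality from the classical Levy--Gromov isoperimetric comparison together with its rigidity statement. Under the hypothesis $\Ric_L \geq (m-2) g_L$ on the closed $(m-1)$-dimensional manifold $(L, g_L)$, Gromov's symmetrization argument \cite[p.~520]{Gromov:1999} yields the non-strict comparison
\[
\cI_{(L, g_L)}(\beta) \geq \cI_{(\mathbb{S}^{m-1}, g_{\mathbb{S}^{m-1}})}(\beta)
\]
for every $\beta \in (0,1)$. At its heart, the proof compares the $(m-2)$-volume of $\partial^{*} \Omega \subset L$ with that of a spherical cap boundary of the same relative volume by means of Heintze--Karcher-type estimates for tubular neighborhoods of hypersurfaces in positively Ricci-curved spaces, together with Bishop--Gromov volume comparison. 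I would cite this directly to obtain the non-strict inequality.

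For the strict inequality I would argue by contradiction. Assume $\cI_{(L, g_L)}(\beta_0) = \cI_{(\mathbb{S}^{m-1}, g_{\mathbb{S}^{m-1}})}(\beta_0)$ for some $\beta_0 \in (0,1)$. By the standard compactness and lower semicontinuity of the perimeter on the closed manifold $(L, g_L)$, there is an isoperimetric region $\Omega \subset L$ of relative volume $\beta_0$ realizing the infimum. Propagating equality back through the symmetrization, the volumes of tubular neighborhoods of $\partial^{*} \Omega$ in $(L, g_L)$ must coincide with those of the tubes around a cap boundary of relative volume $\beta_0$ in $\mathbb{S}^{m-1}$. The rigidity statement of Bishop--Gromov then forces $(L, g_L)$ to be isometric to the round unit sphere, contradicting the assumption that $(L, g_L)$ is not the standard $\mathbb{S}^{m-1}$.

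The main obstacle is making this rigidity step precise, as Gromov's original account does not spell out the equality case in detail. I would address it either by carefully tracing through each stage of the symmetrization to exhibit a continuous family of geodesic tubes in $L$ that saturate Bishop--Gromov, at which point the classical rigidity theorem closes the argument, or more cleanly by appealing to the needle-decomposition approach from optimal transport, which establishes the Levy--Gromov inequality and its rigidity in a unified framework.
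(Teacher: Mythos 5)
Your proposal is correct and coincides in substance with the paper's treatment: the paper offers no argument of its own, citing Gromov directly, and the content behind that citation is exactly your route --- the non-strict Levy--Gromov comparison via symmetrization and Heintze--Karcher/Bishop--Gromov estimates, combined with the equality-case rigidity forcing $(L, g_L)$ to be the round unit sphere, which yields the strict inequality for all $\beta \in (0,1)$ by contradiction. Your flagging of the rigidity step as the delicate point is apt, and your fallback via the needle-decomposition (localization) approach from optimal transport, which proves the Levy--Gromov inequality together with its rigidity in a unified way, is a legitimate means of making the citation airtight.
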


As such, the condition 
\begin{equation}\label{Isocondition}
\cI_{(L,g_{L})}(\beta) > \cI_{(\mathbb{S}^{m-1},g_{\mathbb{S}^{m-1}})}(\beta) \qquad \text{for all} \qquad \beta \in (0,1)
\end{equation}
is a generalization of condition \eqref{Riccicondition}.

Note that  
\[
\cI_{(L,\rho^{2} g_{L})}(\beta) = \rho^{-1}\cI_{(L, g_{L})}(\beta),
\]
for every $\rho > 0$. Recall that up to fudge factors, the Euclidean isoperimetric inequality holds for small volumes in every homogeneously regular Riemannian manifold. It follows that there exists $\rho_{0} = \rho_{0}(L)>0$ so that 
\[
(L,\rho^{2} g_{L})
\]
satisfies \eqref{Isocondition} for all $\rho \in (0, \rho_{0})$. The analogous statement for conditions \eqref{Riccicondition} and \eqref{areacondition} clearly fails. An estimate for the largest possible $\rho_{0}$ is given in terms of a (possibly negative) lower bound for the Ricci curvature and the diameter of $(L,g_{L})$ in \cite{Berard-Besson-Gallot:1985}.

In the following lemma, we record the observation of F. Morgan \cite[Section 3.2]{Morgan:2007} that the isoperimetric product theorem {\cite[Proposition 8]{Ros:2005}} of A. Ros also holds in warped products. 

\begin{lemma} [\cite{Morgan:2007, Ros:2005}]
Suppose that \eqref{areacondition} and \eqref{Isocondition} hold and let $r > 0$. Then $(0, r) \times L$ is the unique isoperimetric region of its volume in the cone $(C, g_C)$ with link $(L, g_L)$.
\end{lemma}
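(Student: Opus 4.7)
The plan is to adapt Ros's isoperimetric product theorem \cite[Proposition 8]{Ros:2005} to the warped product $(C, g_C)$, following Morgan's observation that the argument carries through to this setting \cite[Section 3.2]{Morgan:2007}. Fix $r > 0$ and consider any bounded set of finite perimeter $\Omega \subset C$ with $\mathcal{L}^m_{g_C}(\Omega) = r^m \operatorname{area}(L, g_L)/m$, i.e.\ of the same volume as the slab $(0,r)\times L$. Associate to $\Omega$ its slice profile $f:(0,\infty) \to [0,1]$ defined for a.e.\ $s > 0$ by $f(s) = \mathcal{L}^{m-1}_{g_L}(\Omega_s)/\operatorname{area}(L, g_L)$, where $\Omega_s = \{x \in L : (s,x) \in \Omega\}$. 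The volume constraint then reads $\int_0^\infty s^{m-1} f(s)\,ds = r^m/m$.

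The core step is to reduce the perimeter lower bound to a one-dimensional variational problem for $f$. Two integral estimates are available. First, the coarea formula applied to the distance-to-tip function $s$, combined with the isoperimetric inequality on each slice $(\{s\}\times L, s^2 g_L)$ provided by condition \eqref{Isocondition}, bounds the \emph{horizontal} component of $\mathcal{H}^{m-1}_{g_C}(\partial^*\Omega)$ from below by $\operatorname{area}(L, g_L)\int_0^\infty s^{m-2}\mathcal{I}_{(L, g_L)}(f(s))\,ds$. Second, the divergence theorem applied to the radial field $\partial_r$ in $(C, g_C)$, using $\operatorname{div}_{g_C}\partial_r = (m-1)/s$, yields a Minkowski-type identity relating the \emph{vertical} component of the perimeter to $\int_0^\infty s^{m-2} f(s)\,ds$ (with a tip-contribution that vanishes in the limit). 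A calibration-type combination of these two estimates produces a one-dimensional lower bound for $\mathcal{H}^{m-1}_{g_C}(\partial^*\Omega)$ in terms of $f$ alone which, after optimization subject to the volume constraint, is achieved precisely at $f = \chi_{[0,r]}$. This gives $\mathcal{H}^{m-1}_{g_C}(\partial^*\Omega) \geq r^{m-1}\operatorname{area}(L, g_L)$.

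Uniqueness follows from the strict inequality in \eqref{Isocondition} for $\beta \in (0,1)$, which forces $f(s) \in \{0,1\}$ a.e.\ at equality in the slicing estimate. Combined with the strict monotonicity of the density $s^{m-1}$ in the resulting 1D problem, this selects the connected slab $(0,r)\times L$ (rather than any disjoint union $E \times L$ with $E \not\ni 0$) as the unique minimizer. The most delicate step of this outline is the combination of the horizontal and vertical estimates: a naive Cauchy-Schwarz pairing yields a 1D bound strictly weaker than the desired slab perimeter (as can be checked on examples where $f$ is a constant multiple of an interval characteristic function with $\mathcal{I}_{(L,g_L)}(f)$ not too large), so the correct weighting must be the refinement developed for products in \cite{Ros:2005} and extended to warped products in \cite{Morgan:2007}. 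It is at this point that the area bound \eqref{areacondition} enters essentially, ensuring that ``slab-at-infinity'' configurations are ruled out as limiting minimizers.
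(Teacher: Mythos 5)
There is a genuine gap, and it sits exactly at the step you flag yourself. Note first that the paper offers no self-contained proof of this lemma: it records it as F.~Morgan's observation \cite[Section 3.2]{Morgan:2007} that A.~Ros's symmetrization proof of the product theorem \cite[Proposition 8]{Ros:2005} carries over to warped products, so the benchmark is that symmetrization argument. Your skeleton matches it in part (slice profile $f$, per-slice isoperimetric inequality from \eqref{Isocondition}, reduction to a lower-dimensional problem, uniqueness from strictness), but the decisive step --- the ``calibration-type combination'' of the horizontal and vertical estimates, which you claim is optimized precisely at $f = \chi_{[0,r]}$ --- is asserted rather than proved, and you concede that the naive pairing is strictly weaker than the slab perimeter. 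Deferring ``the correct weighting'' back to \cite{Ros:2005,Morgan:2007} leaves a hole precisely where all the work is.

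Moreover, the specific vertical ingredient you propose cannot be repaired into a sharp argument. The divergence-theorem bound $\int_{\partial^{*}\Omega} |\langle \nu, \partial_r\rangle| \geq (m-1)\int_\Omega s^{-1}$ degenerates for competitors drifting to infinity: at fixed volume, $\int_\Omega s^{-1}$ can be made arbitrarily small (take a geodesic ball far from the tip), while for such a ball the horizontal coarea term only recovers its lateral area; since the far ball has perimeter $\approx \omega_{m-1}\rho^{m-1}$, which approaches the slab value $r^{m-1}\operatorname{area}(L,g_L)$ as $\operatorname{area}(L,g_L) \nearrow \omega_{m-1}$, no additive combination of your two estimates can give the sharp bound $\mathcal{H}^{m-1}_{g_C}(\partial^{*}\Omega) \geq r^{m-1}\operatorname{area}(L,g_L)$. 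In Ros's argument the vertical term is instead the derivative of the slice-volume profile, entering through the pointwise decomposition of the perimeter density as $\bigl((\text{horizontal})^2 + (\text{vertical})^2\bigr)^{1/2}$; after replacing $\mathcal{I}_{(L,g_L)}$ by $\mathcal{I}_{(\mathbb{S}^{m-1}, g_{\mathbb{S}^{m-1}})}$ via \eqref{Isocondition}, the resulting functional is recognized as the perimeter of the cap-symmetrized set in the model cone $\mathbb{R}^m$, and sharpness comes from the sharp \emph{Euclidean} isoperimetric inequality. This is a genuinely $m$-dimensional comparison, not a standalone one-dimensional calibration: off-center balls in $\mathbb{R}^m$ show that without strictness the minimizing profile $f$ need not be characteristic, so a 1D functional of $f$ alone cannot single out $\chi_{[0,r]}$. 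Strictness of \eqref{Isocondition} then forces $f \in \{0,1\}$ at equality, reducing to sets $E \times L$, where your closing 1D analysis is fine. Finally, your account of \eqref{areacondition} is off: the escaping competitors are (near-)Euclidean balls, not ``slabs at infinity,'' and \eqref{areacondition} is what makes the slab strictly beat far-out balls; it is in fact nearly forced by \eqref{Isocondition}, since the small-volume asymptotics give $\mathcal{I}_{(L,g_L)}(\beta)/\mathcal{I}_{(\mathbb{S}^{m-1}, g_{\mathbb{S}^{m-1}})}(\beta) \to (\omega_{m-1}/\operatorname{area}(L,g_L))^{1/(m-1)}$ as $\beta \to 0$, so \eqref{Isocondition} already implies $\operatorname{area}(L,g_L) \leq \omega_{m-1}$.
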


The following generalization of the Lichnerowicz eigenvalue estimate is due to P.\ B\'erard and D.\ Meyer \cite{Berard-Meyer:1982}. 

\begin{lemma} [\cite{Berard-Meyer:1982}]
Suppose that \eqref{areacondition} and \eqref{Isocondition} hold. Then $\lambda_{1}(L,g_{L}) > m-1$.
\end{lemma}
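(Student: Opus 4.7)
The plan is to follow P.\ B\'erard--D.\ Meyer, who replace the pointwise Ricci bound in the classical Lichnerowicz theorem by a global isoperimetric hypothesis. The overall strategy has three ingredients: a strict Faber--Krahn comparison between domains in $L$ and geodesic caps in $\S^{m-1}$ of equal normalized volume, the nodal-domain description of a first eigenfunction, and the explicit value $m-1$ of the first Dirichlet eigenvalue of a hemisphere.

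I would first set up a normalized Schwarz rearrangement: given open $A \subset L$ and non-negative $u \in H^1_0(A)$, define $u^* \in H^1_0(\tilde A)$ on the geodesic cap $\tilde A \subset \S^{m-1}$ with $\cL^{m-1}_{g_{\S^{m-1}}}(\tilde A)/\omega_{m-1} = \cL^{m-1}_{g_L}(A)/\cL^{m-1}_{g_L}(L)$, chosen so that this equality of normalized volumes persists on all super-level sets. Write $\beta(t)$ for the common normalized volume of $\{u > t\}$ and $\{u^* > t\}$. The coarea formula together with Cauchy--Schwarz yields
\[
\frac{1}{\cL^{m-1}_{g_L}(L)}\int_A |\nabla u|^2 \;\geq\; \int_0^\infty \frac{\cI_{(L,g_L)}(\beta(t))^2}{-\beta'(t)}\, dt,
\]
while the corresponding identity on the sphere holds with equality and $\cI_{\S^{m-1}}$ in place of $\cI_{(L,g_L)}$. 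Since the inequality in \eqref{Isocondition} is strict for every $\beta \in (0,1)$ and the set $\{t : \beta(t) \in (0,1)\}$ has positive measure when $u$ is non-constant, this upgrades to a \emph{strict} P\'olya--Szeg\H{o} bound $\cL^{m-1}_{g_L}(L)^{-1}\int_A |\nabla u|^2 > \omega_{m-1}^{-1}\int_{\tilde A} |\nabla u^*|^2$. As rearrangement preserves normalized $L^2$ norms, infimizing Rayleigh quotients gives the strict Faber--Krahn comparison $\lambda_1^{\mathrm{Dir}}(A, L) > \lambda_1^{\mathrm{Dir}}(\tilde A, \S^{m-1})$.

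For the conclusion, let $u$ be a first eigenfunction of $-\Delta_L$ with eigenvalue $\lambda := \lambda_1(L, g_L)$; standard theory shows that the restriction of $u$ to each of its two nodal domains $A^\pm$ is a first Dirichlet eigenfunction with the same eigenvalue. After relabelling, $\beta := \cL^{m-1}_{g_L}(A^+)/\cL^{m-1}_{g_L}(L) \leq 1/2$, and so
\[
\lambda \;>\; \lambda_1^{\mathrm{Dir}}(\tilde A^+, \S^{m-1}) \;\geq\; \lambda_1^{\mathrm{Dir}}(\S^{m-1}_+, \S^{m-1}) \;=\; m-1,
\]
using domain monotonicity on $\S^{m-1}$ and the fact that $\cos \mathrm{dist}(\cdot, N)$ is the first Dirichlet eigenfunction of the hemisphere. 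The main technical point is propagating the strict inequality in \eqref{Isocondition} through the P\'olya--Szeg\H{o} step to a strict Faber--Krahn statement; a non-strict version would only recover the classical bound $\lambda_1 \geq m-1$, and the refinement required here is exactly what B\'erard--Meyer provide.
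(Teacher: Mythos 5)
Your proposal is correct and follows essentially the same route as the paper, which states this lemma without proof by citing B\'erard--Meyer: their argument is precisely your symmetrization scheme, namely a strict P\'olya--Szeg\H{o}/Faber--Krahn comparison extracted from the strict profile inequality \eqref{Isocondition}, applied to the two nodal domains of a first eigenfunction and compared against the first Dirichlet eigenvalue $m-1$ of the hemisphere. One small observation: your argument never actually invokes \eqref{areacondition} --- the Rayleigh quotients are volume-normalized on both sides, so \eqref{Isocondition} alone yields the conclusion (and the small-volume asymptotics $\cI_{(L,g_L)}(\beta)/\cI_{(\mathbb{S}^{m-1},g_{\mathbb{S}^{m-1}})}(\beta) \to (\omega_{m-1}/\cL^{m-1}_{g_L}(L))^{1/(m-1)}$ as $\beta \to 0$ show that \eqref{Isocondition} essentially forces \eqref{areacondition} anyway).
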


From this, it is easy to see that condition \eqref{Riccicondition} may be replaced by \eqref{Isocondition} in Theorem \ref{thm:uniquenessiso}.

\bibliographystyle{amsplain}
\bibliography{references}

\end{document}